\documentclass[12pt, reqno]{amsart}

\theoremstyle{definition}
\newtheorem{dfn}{Definition}[section]

\theoremstyle{remark}
\newtheorem{rem}[dfn]{Remark}

\theoremstyle{plain}
\newtheorem{thm}[dfn]{Theorem}
\newtheorem{lem}[dfn]{Lemma}
\newtheorem{prop}[dfn]{Proposition}

\newcommand{\Hom}{\mathop{\mathrm{Hom}}\nolimits}
\newcommand{\End}{\mathop{\mathrm{End}}\nolimits}
\newcommand{\Rad}{\mathop{\mathrm{Rad}}\nolimits}
\newcommand{\SLF}{\mathop{\mathrm{SLF}}\nolimits}
\newcommand{\soc}{{\rm Soc}}
\newcommand{\tr}{\operatorname{tr}}

\numberwithin{equation}{section}

\title[Symmetric linear functions and pseudotrace maps]
{Some remarks on symmetric linear functions and pseudotrace maps}
\author[Y.~Arike]{Yusuke Arike}
\address{Department of Pure and Applied Mathematics, Graduate School of Information Science and Technology, 
Osaka University, Toyonaka, Osaka 560-0043, JAPAN}
\email{y-arike@cr.math.sci.osaka-u.ac.jp}
\subjclass[2000]{Primary~16S50, Secondary~16D40}
\date{\today}

%\runninghead{Symmetric linear functions and pseudotrace maps}
%\title{Some remarks on  symmetric linear functions and pseudotrace maps}
%\Author{1}{Yusuke}{Arike}
%\affiliation{1}
%{Department of Pure and Applied Mathematics, 
%Graduate School of Information Science and Technology, Osaka University, 
%Machikaneyamacho 1-1, Toyonaka, Osaka 560-0043, JAPAN}
%\KeyWords{{Symmetric algebras}{Symmetric linear functions}{Pseudotrace maps}}
%\Subject{16S50, 16D40}
\begin{document}
\maketitle
\begin{abstract}
Let $A$ be a finite-dimensional associative algebra
and $\phi$ a symmetric linear function on $A$.
In this note, we will show that the pseudotrace maps defined in \cite{Mi}
are obtained as special cases of well-known symmetric linear functions on the 
 endomorphism rings of projective modules.
We also prove that modules are interlocked with $\phi$ 
if and only if they are projective.
As an application of our approach, we will give proofs of several propositions 
 and theorems in \cite{Mi}
for an arbitrary finite-dimensional associative algebra. 
\end{abstract}
\section{Introduction}
In this note, we work on an algebraically closed field $\mathbf{k}$ of characteristic $0$.
Let $A$ be a finite-dimensional associative $\mathbf{k}$-algebra.
A linear function $\phi$ on $A$ is said to be {\em symmetric}
if $\phi(ab) = \phi(ba)$ for all $a, b \in A$.
We denote the space of symmetric linear functions on $A$
by $\SLF(A)$.

In \cite{Mi}, Miyamoto introduces a notion of a {\em pseudotrace map} on a basic symmetric $\mathbf{k}$-algebra $P$
in order to construct pseudotrace functions of logarithmic modules of vertex operator algebras
satisfying some  finiteness condition called $C_2$-condition.
Let $\phi$ be a symmetric linear function on $P$ 
which induces a nondegenerate bilinear form $P \times P \to \mathbf{k}$.
Then 
the pseudotrace map $\tr_{W}^{\phi}$ is a symmetric linear function
on the endomorphism ring of a finite-dimensional right $P$-module $W$ 
called {\em interlocked with $\phi$}.
As it is implicitly mentioned in \cite{Mi} and 
it is proved in this note, a finite-dimensional right $P$-module which is interlocked with $\phi$
is in fact a direct sum of indecomposable projective modules.

For an arbitrary finite-dimensional $\mathbf{k}$-algebra $A$,
a finitely generated projective right $A$-module $W$ has an {\em $A$-coordinate system of $W$},
that is, $\{u_i\}_{i=1}^{n} \subset W$ and $\{\alpha_i\}_{i=1}^{n} \subset \Hom_A(W, A)$
such that $w = \sum_{i=1}^{n} u_i \alpha_i(w)$ for all $w \in W$ (see \cite{B}).
For any symmetric linear function $\phi$ on $A$,
we can define a symmetric linear function on $\End_A(W)$ by
\begin{align}
\phi_{W}(\alpha) = \phi (\sum_{i=1}^{n}\alpha_i \circ \alpha (u_i)) \notag
\end{align}
for all $\alpha \in \End_A(W)$ (c.f. \cite{Br}).
In this note, we show that the symmetric linear function $\tr_W^{\phi}$ coincides with the pseudotrace map 
when $A = P$ and $\phi$ induces a nondegenerate symmetric associative bilinear form on $P$.
We also prove that a right $P$-module $W$ is interlocked with $\phi$
if and only if $W$ is projective.
Then we can prove several propositions and theorems in \cite{Mi}
for {\em arbitrary finite-dimensional $\mathbf{k}$-algebras}.

This note is organized as follows.
In section 2, we recall a construction of a symmetric linear function $\phi_{W}$ 
on the endomorphism ring of finitely generated 
projective modules $W$ from a symmetric linear function $\phi$ on $A$.
In section 3, 
we assume that $P$ is indecomposable, basic and symmetric
and $\phi \in \SLF(P)$ satisfies some conditions (see section 3).
We recall a notion of a right $P$-module $W$ which is interlocked with $\phi$ 
and a notion of a pseudotrace map $\tr_{W}^{\phi}$ defined in \cite{Mi}.
We show that $W$ is interlocked with $\phi$ if and only if $W$ is projective. 
By using this fact, for any indecomposable projective module $W$,
we define $\phi_{W}$ and show that $\phi_{W}$ coincides with $\tr_{W}^{\phi}$.
In section 4 and 5, we prove several propositions and theorems for pseudotrace maps in \cite{Mi} by using 
$\phi_{W}$ for arbitrary finite-dimensional $\mathbf{k}$-algebras. 

\section{Projective modules and symmetric linear functions}
Let $A$ be a finite-dimensional associative $\mathbf{k}$-algebra.
We denote a left (resp. right) $A$-module $M$ by $_AM$ (resp. $M_A$).

In this section, we recall a notion of a symmetric linear function
on the endomorphism ring of a finitely generated projective right $A$-module (c.f. \cite{Br}).

Assume that  $W_A$ is finitely generated.
Then $W_A$ is projective if and only if
there exist subsets $\{u_i\}_{i=1}^n \subset W_A$
and $\{\alpha_i\}_{i=1}^n \subset \Hom_{A}(W_A, A)$ such that
\begin{align}
w = \sum_{i=1}^{n} u_i \alpha_i(w) \notag
\end{align}
for all $w \in W_A$ (see \cite{B}, chapter II, \S 2.6, Proposition 12).
The set $\{u_i, \alpha_i\}_{i=1}^{n}$ is called
an {\em $A$-coordinate system} of $W_A$. 

Assume that $W_A$ is finitely generated and projective.
Let $\{u_i, \alpha_i\}_{i=1}^{n}$ be an $A$-coordinate system of $W_A$.
Then we define a  map 
\begin{align}
T_{W_A} : \End_A(W_A) \to A / [A, A], \notag
\end{align}
by 
$\alpha \mapsto \pi \left(\sum_{i=1}^{n} \alpha_i \circ 
 \alpha (u_i) \right)$
where $\pi : A \to A/[A, A]$ is the canonical surjection (c.f. \cite{H}, \cite{S}).
It is known that the map $T_{W_A}$ does not depend on the choice of 
$A$-coordinate systems and that
$T_{W_A} (\alpha \circ \beta) = T_{W_A} (\beta \circ \alpha)$
for all $\alpha, \beta \in \End_A(W_A)$ (see \cite{H}, \cite{S}).
For $\phi \in \SLF(A)$, we set $\phi_{W_A} = \phi \circ T_{W_A} : \End_A(W_A) \to \mathbf{k}$.
Then we have the following.

\begin{prop}\label{p-1.2.1}
Assume that $W_A$ is finitely generated and projective and
let $\phi$ be a symmetric linear function on $A$.
Then $\phi_{W_A}$ is a symmetric linear function on $\End_A(W_A)$.
\end{prop}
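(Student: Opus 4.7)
The plan is to reduce the proposition to the properties of $T_{W_A}$ that have already been recalled in the excerpt, the essential observation being that a symmetric linear function on $A$ descends through the quotient $A/[A,A]$.

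First, I would note that any $\phi \in \SLF(A)$ vanishes on the commutator subspace $[A,A]$, since by hypothesis $\phi(ab - ba) = \phi(ab) - \phi(ba) = 0$ for all $a, b \in A$. Hence $\phi$ factors uniquely as $\phi = \bar\phi \circ \pi$, where $\pi : A \to A/[A,A]$ is the canonical surjection appearing in the definition of $T_{W_A}$ and $\bar\phi : A/[A,A] \to \mathbf{k}$ is the induced linear functional. Consequently $\phi_{W_A} = \phi \circ T_{W_A} = \bar\phi \circ T_{W_A}$.

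Linearity of $\phi_{W_A}$ is then immediate: the assignment $\alpha \mapsto \sum_{i=1}^{n} \alpha_i \circ \alpha(u_i)$ is manifestly $\mathbf{k}$-linear in $\alpha$, so $T_{W_A}$ is linear, and $\bar\phi$ is linear by construction. For the symmetry, I would invoke the identity
\[
T_{W_A}(\alpha \circ \beta) = T_{W_A}(\beta \circ \alpha) \qquad \text{in } A/[A,A],
\]
which has already been recalled from \cite{H}, \cite{S}, and apply $\bar\phi$ to both sides to obtain
\[
\phi_{W_A}(\alpha \circ \beta) = \phi_{W_A}(\beta \circ \alpha),
\]
as required.

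There is no genuine obstacle at this stage, since the substantive content---independence of $T_{W_A}$ from the choice of $A$-coordinate system and its trace-like behavior on $\End_A(W_A)$---has been delegated to the cited references. If one wished to verify those claims directly, the main work would be to show that for any two $A$-coordinate systems $\{u_i, \alpha_i\}$ and $\{v_j, \beta_j\}$ of $W_A$, the elements $\sum_i \alpha_i(\alpha(u_i))$ and $\sum_j \beta_j(\alpha(v_j))$ differ by an element of $[A,A]$, together with an analogous commutator identity establishing the trace property; both reduce to routine manipulations with the defining relation $w = \sum_i u_i \alpha_i(w)$ and with the fact that endomorphisms of a right $A$-module commute with the right action of $A$.
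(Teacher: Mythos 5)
Your proof is correct and follows the same route the paper implicitly takes: the paper states this proposition without a separate proof, treating it as an immediate consequence of the two facts it has just recalled about $T_{W_A}$ (independence of the coordinate system and the trace identity $T_{W_A}(\alpha\circ\beta)=T_{W_A}(\beta\circ\alpha)$) together with the observation that a symmetric $\phi$ factors through $A/[A,A]$. Your write-up simply makes that composition explicit.
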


\section{Miyamoto's psedotrace maps}
In this section, we show that the map $\phi_{W_A}$ coincides with the
pseudotrace map defined in \cite{Mi} if $A$ satisfies extra conditions.

First we recall the definition of a pseudotrace map.
Let $P$ be a basic symmetric indecomposable $\mathbf{k}$-algebra
and assume that  $\phi \in \SLF(P)$  induces a nondegenerate
symmetric associative bilinear form $\langle \ , \ \rangle : P \times P \to \mathbf{k}$.
We fix a decomposition of the unity $1$ by mutually orthogonal primitive idempotents:
\begin{align}
1 = e_1 + e_2 + \cdots + e_k. \notag
\end{align}
We further assume that $\phi (e_i) = 0$ for all $1 \le i \le k$.
Note that we have
$P / J(P) = \mathbf{k} \bar{e}_1 \oplus \cdots \oplus \mathbf{k} \bar{e}_k$
since $P$ is basic and indecomposable.
It is well-known that $\{e_i P | 1 \le i \le k\}$
is the complete list of indecomposable projective right $P$-modules.

Since $a \in \soc (P_P)$ if and only if $a J(P) = 0$
we see that
\begin{align}
\langle a J(P), P  \rangle 
= \langle J(P), a \rangle = \langle P, J(P) a \rangle  = 0. \notag
\end{align}
The same argument for $\soc({}_PP)$ shows
$\soc(P_P) = \soc({}_PP)$.
Thus $\soc(P_P) = \soc({}_PP)$ is a two-sided ideal 
and we denote it by $\soc(P)$.
Then we have $\langle a J(P), P \rangle = \langle a, J(P) \rangle$
for any $a \in P$.
This identity shows that 
$\soc(P) = J(P)^{\perp}$.
Similarly we have $J(P) = \soc(P)^{\perp}$.
Thus the bilinear form $\langle \ , \ \rangle$ induces a
nondegenerate pairing $\langle \ , \ \rangle : \soc(P) \times P / J(P) \to \mathbf{k}$.
Let $\{f_1, f_2, \dots , f_k \}$ be a basis of $\soc(P)$ which are dual to the basis
$\{\bar{e}_1, \bar{e}_2, \dots , \bar{e}_k\}$ of $P/J(P)$, that is,
$\langle f_i, \bar{e}_j \rangle = \langle f_i, e_j \rangle= \delta_{ij}$
for $\ 1 \le i, j \le k$.

\begin{lem}\label{p-3.2.1}
$e_i f_j = f_j e_i = \delta_{ij} f_j$ for all $1 \le i, j \le k$.
\end{lem}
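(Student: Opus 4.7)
The plan is to exploit the duality between $\soc(P)$ and $P/J(P)$ together with the associativity of the bilinear form. Both $e_i f_j$ and $f_j e_i$ lie in $\soc(P)$, since $\soc(P)$ has already been shown to be a two-sided ideal. Hence each can be expanded uniquely in the basis $\{f_1,\dots,f_k\}$ of $\soc(P)$, and the coefficients can be read off by pairing against the dual basis $\{\bar e_1,\dots,\bar e_k\}$.

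Concretely, I would first write $e_i f_j = \sum_{l=1}^k c_l f_l$. Using $\langle f_l, e_m\rangle = \delta_{lm}$, the coefficient $c_m$ equals $\langle e_i f_j, e_m\rangle$. By associativity of $\langle\ ,\ \rangle$ and the orthogonality $e_m e_i = \delta_{mi} e_i$ of the primitive idempotents, I get
\begin{align}
c_m = \langle e_i f_j, e_m \rangle = \langle f_j, e_m e_i \rangle = \delta_{mi}\langle f_j, e_i\rangle = \delta_{mi}\delta_{ji}. \notag
\end{align}
Thus only the coefficient $c_i$ can be nonzero, and it equals $\delta_{ij}$, giving $e_i f_j = \delta_{ij} f_j$.

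The identity $f_j e_i = \delta_{ij} f_j$ is then established by the mirror argument: write $f_j e_i = \sum_l d_l f_l$, extract $d_m = \langle f_j e_i, e_m\rangle$, and apply associativity on the other side to get $d_m = \langle f_j, e_i e_m\rangle = \delta_{im}\delta_{jm}$.

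There is essentially no serious obstacle; everything reduces to correct bookkeeping. The two small points that need care are (i) making sure the expansion of $e_i f_j$ in $\{f_l\}$ is legitimate, which requires invoking the fact established just above the lemma that $\soc(P)$ is a two-sided ideal, and (ii) keeping the indices on the dual basis straight so that the two applications of associativity of $\langle\ ,\ \rangle$ produce the correct Kronecker deltas.
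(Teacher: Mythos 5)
Your proof is correct and follows essentially the same route as the paper's: observe that $e_i f_j \in \soc(P)$ (since $\soc(P)$ is a two-sided ideal), pair it against the dual basis $\{\bar e_m\}$ of $P/J(P)$, and use the cyclic property of the symmetric associative form together with orthogonality of the primitive idempotents to read off the coefficient. The paper's proof is just a more compressed version of the same calculation (it writes $\langle e_i f_j,\bar e_k\rangle=\delta_{ik}\langle f_j,\bar e_k\rangle=\delta_{ik}\delta_{kj}$ in one line and leaves the $f_j e_i$ case implicit), while you spell out both sides explicitly.
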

\begin{proof}
Note that $e_i f_j \in \soc (P)$.
Thus we have
\begin{align}
\langle e_i f_j, \bar{e}_k \rangle =  \delta_{ik} \langle f_j, \bar{e}_k \rangle = \delta_{ik} 
 \delta_{kj} \notag
\end{align}
so that $e_i f_j = \delta_{ij} f_j$.
\end{proof}

\begin{lem}\label{p-3.2.2}
$\soc(P) \subseteq J(P)$,
in particular, $e_i \soc(P) e_j \subseteq e_i J(P) e_j$ for all $1 \le i, j \le k$.
\end{lem}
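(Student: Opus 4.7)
The plan is to prove the inclusion $\soc(P) \subseteq J(P)$ by contradiction; the statement on corners then follows at once by sandwiching between $e_i$ and $e_j$. The strategy is to use Nakayama's lemma to show that a hypothetical violation forces some indecomposable projective $e_{i_0}P$ to be simple, and then to show this conclusion is incompatible with the combined hypotheses that $P$ is basic, symmetric, and indecomposable and that $\phi(e_i) = 0$.

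Suppose $x \in \soc(P) \setminus J(P)$. The identification $P/J(P) = \bigoplus_i \mathbf{k}\bar e_i$ gives the vector-space decomposition $P = \bigoplus_i \mathbf{k} e_i \oplus J(P)$, so I may write $x = \sum_i c_i e_i + j$ with $j \in J(P)$ and some $c_{i_0} \neq 0$. Since $xJ(P) = 0$, for each $r \in J(P)$ I get, by projecting onto the summand $e_{i_0}P$ of $P$, the relation $c_{i_0} e_{i_0} r = -e_{i_0} j r$. As $r$ varies, this yields $c_{i_0}\,e_{i_0}J(P) \subseteq e_{i_0}J(P)^2 = (e_{i_0}J(P))\cdot J(P)$. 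Applying Nakayama's lemma to the finitely generated right $P$-module $e_{i_0}J(P)$ forces $e_{i_0}J(P) = 0$, so $e_{i_0}P$ is a simple right $P$-module.

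The main obstacle is the remaining step, namely ruling out a simple indecomposable projective $e_{i_0}P$. Here I would invoke the symmetry of $P$, whose Nakayama permutation is the identity, to identify $\soc(e_iP) \cong S_i := e_iP/e_iJ(P)$ for every $i$. Combined with basicness (which makes the $S_i$ pairwise non-isomorphic), this shows $\Hom_P(e_iP, e_{i_0}P) = 0 = \Hom_P(e_{i_0}P, e_iP)$ for every $i \neq i_0$: a nonzero map in the first direction would be surjective onto $S_{i_0}$ (forcing $S_i \cong S_{i_0}$ on tops), and a nonzero map in the second would embed $S_{i_0}$ into $\soc(e_iP) \cong S_i$. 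Translating to idempotent corners, $e_iPe_{i_0} = 0 = e_{i_0}Pe_i$ for $i \neq i_0$, whence $Pe_{i_0} = e_{i_0}Pe_{i_0} = e_{i_0}P$; the calculation $e_{i_0}a = e_{i_0}ae_{i_0} = ae_{i_0}$ for all $a \in P$ shows that $e_{i_0}$ is a central idempotent, and indecomposability of $P$ forces $e_{i_0} = 1$. Then $k = 1$ and $P = S_1 \cong \mathbf{k}$, but then $\phi(1) = \phi(e_1) = 0$ contradicts the nondegeneracy of $\phi$, completing the proof.
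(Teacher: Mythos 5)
Your proof is correct and, while it shares with the paper the overall strategy (contradict indecomposability of $P$ by producing a central primitive idempotent, with Nakayama's lemma as the key engine), it reaches that contradiction by a genuinely different route. The paper decomposes $\soc(P) = \bigoplus_i \soc(Pe_i)$, supposes $\soc(Pe_i) \not\subseteq J(P)$, and uses the fact that $J(P)e_i$ is the unique maximal submodule of $Pe_i$ to write $Pe_i = \soc(Pe_i) + J(P)e_i$; Nakayama then forces $\soc(Pe_i) = Pe_i$, so $e_i \in \soc(P)$, and since $\soc(P)$ has already been shown to be a two-sided ideal this kills $J(P)e_i$ and $e_iJ(P)$ simultaneously. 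You instead take an arbitrary $x \in \soc(P)\setminus J(P)$, expand it along $P = \bigoplus_i \mathbf{k}e_i \oplus J(P)$, and use the annihilation $xJ(P)=0$ to get $e_{i_0}J(P) \subseteq e_{i_0}J(P)\cdot J(P)$, applying Nakayama to the module $e_{i_0}J(P)$ directly. This yields only the one-sided vanishing $e_{i_0}J(P)=0$, so you then need a supplementary argument for centrality: the Nakayama-permutation observation that symmetry gives $\soc(e_iP)\cong e_iP/e_iJ(P)$, combined with basicness, kills both $\Hom_P(e_iP,e_{i_0}P)$ and $\Hom_P(e_{i_0}P,e_iP)$ for $i\neq i_0$ and hence $e_iPe_{i_0}=e_{i_0}Pe_i=0$. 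Your route is thus somewhat longer, but in one respect more careful than the paper's: you explicitly dispose of the residual case $e_{i_0}=1$, $k=1$, $P\cong\mathbf{k}$ using the hypothesis $\phi(e_i)=0$, whereas the paper ends with ``this contradicts the assumption that $P$ is indecomposable,'' which is vacuous when the central idempotent is $1$ itself; the hypothesis $\phi(e_i)=0$ is in fact what excludes $P=\mathbf{k}$, the one basic symmetric indecomposable algebra for which $\soc(P)\subseteq J(P)$ genuinely fails.
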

\begin{proof}
Since $P = \bigoplus_{i=1}^{k} P e_i$, we see that
$\soc(P) = \bigoplus_{i=1}^{k} \soc(Pe_i)$.
Then $J(P)e_i$ is the unique maximal submodule of $Pe_i$.
Suppose that $\soc(Pe_i)$ is not contained in $J(P)$.
We have
$Pe_i = \soc(Pe_i)+J(P)e_i$
since $J(P)e_i$ is the unique maximal submodule of $P e_i$.
Then we conclude $\soc(Pe_i) = Pe_i$ by Nakayama's lemma.
Therefore we can see  $e_i \in \soc(Pe_i)$.
By the same argument for $P=\bigoplus_{i=1}^{k} e_i P$,
we obtain $e_i \in \soc(e_iP)$.
Thus we find $J(P) e_i = e_i J(P) = 0$, which shows that $e_i$
is a central idempotent of $P$.
This contradicts to the assumption that $P$ is indecomposable.
\end{proof}

Since $P = \sum_{i=1}^{k} \mathbf{k} e_i + J(P)$,
we have by Lemma \ref{p-3.2.1}
\begin{align}
e_i P e_j = \begin{cases}
\mathbf{k} e_i + e_i J(P) e_i, & i=j, \\
e_i J(P) e_j, & i \not= j, 
\end{cases}\label{eq-3.0.1}
\end{align}
and
\begin{align}
e_i \soc(P) e_j = \begin{cases}
\mathbf{k} f_i, & i=j, \\
0, & i \not= j.
\end{cases}\label{eq-3.0.2}
\end{align}
Set $d_{ij} = \dim_{\mathbf{k}} e_i J(P) e_j / e_i \soc(P) e_j$ for all $1 \le i, j \le k$.
Then since the pairing 
\begin{align}
\langle \ , \ \rangle : e_i J(P) e_j / e_i \soc(P) e_j  \times &  e_j J(P) e_i / 
 e_j \soc(P) e_i 
\to \mathbf{k} \notag
\end{align}
is well-defined and nondegenerate, it follows that $d_{ij} = d_{ji}$ for all $1 \le i, j \le k$.

Also since
$e_i \soc(P) e_j \subseteq e_i J(P) e_j \subseteq e_i P e_j$, \eqref{eq-3.0.1}
and \eqref{eq-3.0.2},
we have $\dim_{\mathbf{k}} e_i P e_i = d_{ii} + 2$ and $\dim_{\mathbf{k}} e_i P e_j = d_{ij}$
for $i \not= j$
by \eqref{eq-3.0.1} and \eqref{eq-3.0.2}.

\begin{lem}[\cite{Mi}, Lemma 3.2]\label{p-3.1}
The algebra $P$ has a basis
\begin{align}
\Omega = \{\rho_0^{ii}, \rho_{d_{ii}+1}^{ii}, \rho_{s_{ij}}^{ij} | 
1 \le i, j \le k, \ 1 \le s_{ij} \le d_{ij} \} \notag
\end{align}
satisfying
\begin{enumerate}
\item\label{p-3.1.a}
$\rho_0^{ii} = e_i$, $\rho_{d_{ii} + 1}^{ii} = f_i$,
\item\label{p-3.1.b}
$e_i \rho_s^{ij} e_j = \rho_s^{ij}$,
\item\label{p-3.1.c}
$\langle \rho_s^{ij}, \rho_{d_{ab} + 1 - t}^{ab} \rangle 
= \delta_{i, b} \delta_{j, b} \delta_{s, t}$,
\item\label{p-3.1.d}
$\rho_s^{ij} \rho_{d_{ji} + 1 - s}^{ji} = f_i$,
\item\label{p-3.1.e}
the space spanned by $\{\rho_t^{ij} | t \ge s\}$ is $e_i P e_i$-invariant.
\end{enumerate}
\end{lem}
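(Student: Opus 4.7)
My strategy is to work block by block in the Peirce decomposition $P = \bigoplus_{i,j=1}^{k} e_i P e_j$. Using Lemma \ref{p-3.2.1}, $\phi(e_i P e_j \cdot e_a P e_b)$ vanishes unless $j=a$ and $b=i$, so the ambient form restricts to nondegenerate pairings $\langle\,,\,\rangle : e_i P e_j \times e_j P e_i \to \mathbf{k}$ for every $(i,j)$; this is what will drive (c). The plan is then to construct bases of each block that are simultaneously dual across paired blocks and compatible with the radical filtration within each block, and finally to verify (d) by a direct Peirce calculation.

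For the off-diagonal blocks $(i\ne j)$, I would pick any basis $\rho_1^{ij},\dots,\rho_{d_{ij}}^{ij}$ of $e_i P e_j = e_i J(P) e_j$ refining the radical filtration $F^s = e_i J(P)^s e_j$, with deeper layers receiving larger indices, and take the dual basis of $e_j P e_i$ relabeled as $\rho_{d_{ji}+1-s}^{ji}$ (legitimate since $d_{ij}=d_{ji}$). Property (b) is built in, (c) is the duality, and (e) follows because left multiplication by $e_i J(P) e_i$ sends each $\rho_t^{ij}$ strictly deeper into the filtration, hence into the span of $\{\rho_u^{ij}\mid u>t\}$.

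For the diagonal blocks $(i=j)$, set $\rho_0^{ii}=e_i$ and $\rho_{d_{ii}+1}^{ii}=f_i$, which gives (a) and, together with Lemma \ref{p-3.2.1} and $\phi(f_i)=1$, the boundary cases of (c). I would then choose $d_{ii}$ lifts of a filtration-compatible basis of $e_i J(P) e_i / \mathbf{k} f_i$, adjust each by a scalar multiple of $f_i$ to force $\phi(\rho_s^{ii})=0$ (yielding $\langle\rho_s^{ii},e_i\rangle=0$), and finally run a Gram--Schmidt-type procedure relative to the nondegenerate symmetric pairing induced on $e_i J(P) e_i / \mathbf{k} f_i$ to arrange $\langle\rho_s^{ii},\rho_{d_{ii}+1-t}^{ii}\rangle=\delta_{st}$.

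Finally, for (d) set $x := \rho_s^{ij}\rho_{d_{ji}+1-s}^{ji}\in e_i P e_i$. Associativity of $\langle\,,\,\rangle$ and (c) give $\langle x, e_k\rangle = \langle\rho_s^{ij}, \rho_{d_{ji}+1-s}^{ji} e_k\rangle = \delta_{ik}$, so the $f_i$-coefficient of $x$ equals $1$ and all $f_k$-coefficients vanish for $k\neq i$. When $i\ne j$, both factors lie in $J(P)$, so $x\in J(P)$ and the $e_i$-coefficient also vanishes. The main obstacle is to eliminate the remaining terms of $x$ in the middle block $e_i J(P) e_i / \mathbf{k} f_i$, i.e.\ to force $x\in \mathbf{k} f_i = \soc(P)\cap e_i P e_i$. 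I would handle this by a descending induction on the filtration index, modifying each $\rho_s^{ij}$ (and its dual) by vectors strictly deeper in the filtration: such adjustments preserve every previously established pairing, because deep vectors pair trivially with shallow duals, while giving exactly the freedom needed to cancel the stray middle components. The induction terminates since $J(P)$ is nilpotent.
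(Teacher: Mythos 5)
The paper itself gives no proof of this lemma; it is quoted verbatim from Miyamoto \cite[Lemma 3.2]{Mi}, so there is no in-paper argument to compare yours against. Your Peirce-block strategy, built on the nondegenerate block pairings $e_iPe_j\times e_jPe_i\to\mathbf{k}$ together with filtration-refining bases, is a reasonable route, and the treatment of (a), (b), (c) and the diagonal/off-diagonal split is set up sensibly.

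The weak points are in (d) and (e). For (e) you argue only the ``primary'' block $\rho^{ij}$, which you chose to refine the radical filtration; the dual block $\rho^{ji}$ is obtained by duality and does \emph{not} automatically refine the radical filtration of $e_jPe_i$. What it refines is the socle filtration: $(e_iJ(P)^re_j)^\perp\cap e_jPe_i$ equals $e_j\,\mathrm{ann}(J(P)^r)\,e_i$, and since $J(P)\cdot\mathrm{ann}(J(P)^r)\subseteq\mathrm{ann}(J(P)^{r-1})$, the dual tail spans are again $e_jPe_j$-submodules. You need to say this; as written, (e) is unproved for half the blocks, and the same gap afflicts the diagonal block if the hyperbolic-basis (``Gram--Schmidt'') step is not performed in a way that respects the filtration. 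For (d), the ``descending induction, modify by strictly deeper vectors'' is asserted without any verification that the available freedom suffices, that the modifications do not reintroduce bad components, or that the previously arranged properties survive; this is the kind of claim that needs to be checked, not waved at. In fact the induction is unnecessary: once (c) holds and the bases refine the radical/socle filtrations, set $x=\rho_s^{ij}\rho_{d_{ji}+1-s}^{ji}\in e_iPe_i$ and read off its coordinates in $\Omega_{ii}$ via duality. One has $\langle x,e_i\rangle=\phi(x)=\langle\rho_s^{ij},\rho_{d_{ji}+1-s}^{ji}\rangle=1$, which is the $f_i$-coefficient; and for $\rho_u^{ii}$ with $u\ge1$ one has $\langle x,\rho_u^{ii}\rangle=\langle\rho_u^{ii}\rho_s^{ij},\rho_{d_{ji}+1-s}^{ji}\rangle$, where $\rho_u^{ii}\rho_s^{ij}$ lies one radical layer deeper, hence in the span of $\{\rho_t^{ij}\mid t>s\}$, which pairs to zero with $\rho_{d_{ji}+1-s}^{ji}$; so all remaining coordinates vanish and $x=f_i$. (The dual product $\rho_{d_{ji}+1-s}^{ji}\rho_s^{ij}$ is handled symmetrically via the socle filtration.) Replacing the hand-waved induction by this computation closes the gap.
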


For $1 \le i, j \le k$, set
\begin{align}
&\Omega_i = \{\rho_s^{ij} | 1 \le j \le k, s \}, \ \Omega_{ij} = \{\rho_s^{ij} | s\}. \notag
\end{align}
Note that $\Omega_i$ is a basis of $e_i P$ for any $1 \le i \le k$
and  $\Omega - \{e_1, \dots , e_k\}$ is a basis of $J(P)$.
We sometimes denote an element of $\Omega_{ij}$ by $\rho^{ij}$.

\begin{dfn}[\cite{Mi}, Definition 3.6]
Assume that $W_P$ is finitely generated.
The module $W_P$ is said to be {\em interlocked with $\phi$}
if 
$\ker (f_i) = \{w \in W | w f_i = 0\}$
is equal to $\sum_{\rho \in \Omega - \{e_i \}} W \rho$
for all $1 \le i \le k$.
\end{dfn}
It is obvious that $\ker(f_i) \supseteq \sum_{\rho \in \Omega - \{e_i \}} W \rho$
since $\rho f_i = 0$ for any $\rho \in \Omega - \{e_i \}$.
In \cite{Mi}, the pseudotrace map is defined on the endomorphism ring of 
a finite-dimensional right $P$-module which is interlocked with $\phi$.
The isomorphism stated in \cite[p.68]{Mi} is more precisely understood as follows.
\begin{thm}\label{p-3.1.1}
Assume that $W_P$ is finitely generated.
Then $W_P$ is interlocked with $\phi$
if and only if $W_P$ is projective.
In particular, if $W_P$ is interlocked with $\phi$
then the multiplicity of the indecomposable projective module $e_i P$
in $W_P$ is given by $\dim_{\mathbf{k}} W_P f_i$ for $1 \le i \le k$.
\end{thm}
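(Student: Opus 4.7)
My plan is to prove both directions separately, reading off the multiplicity formula along the way. The forward direction (projective $\Rightarrow$ interlocked) reduces by additivity to the indecomposable case $W = e_jP$. For it I would use Lemma~\ref{p-3.2.1} together with the identity $J(P)\soc(P) = 0$, which follows from the equality $\soc(P_P) = \soc({}_PP)$ established in the text (so $\soc({}_PP) = \{a \in P \mid J(P)a = 0\}$). A direct computation then shows that both $\{x \in e_jP \mid xf_i = 0\}$ and $\sum_{\rho \in \Omega - \{e_i\}}(e_jP)\rho$ equal $e_iJ(P)$ when $j = i$ and equal $e_jP$ when $j \neq i$; in particular $\dim_{\mathbf{k}}(e_jP)f_i = \delta_{ij}$.

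For the reverse direction the first step is to reformulate the interlocking condition. Since $\Omega - \{e_i\}$ is a basis of the complement $\bigoplus_{j\neq i}\mathbf{k}e_j \oplus J(P)$ of $\mathbf{k}e_i$ in $P$, we have $\sum_{\rho \in \Omega - \{e_i\}}W\rho = \sum_{j\neq i}We_j + WJ(P)$; intersecting with $We_i$ gives $WJ(P)e_i$. Combined with the automatic inclusion $WJ(P)e_i \subseteq \{x \in We_i \mid xf_i = 0\}$ (using $J(P)f_i = 0$), this shows that $W$ is interlocked if and only if the right-multiplication map $\cdot f_i : We_i \to Wf_i$ has kernel exactly $WJ(P)e_i$, equivalently $\dim_{\mathbf{k}} Wf_i = \dim_{\mathbf{k}} We_i/WJ(P)e_i$ for every $i$.

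Now I would compare $W$ with its projective cover $\pi : \tilde W = \bigoplus_i (e_iP)^{n_i} \to W$, where $n_i = \dim_{\mathbf{k}} (W/WJ(P))e_i = \dim_{\mathbf{k}} We_i/WJ(P)e_i$, and set $K = \ker \pi \subseteq \tilde W J(P)$. Because $\tilde W$ is projective it is interlocked by the first direction, so $\dim_{\mathbf{k}} \tilde W f_i = n_i$ by the reformulation. The hypothesis that $W$ is interlocked gives $\dim_{\mathbf{k}} W f_i = n_i$ as well, so the surjection $\tilde W f_i \to W f_i$ induced by $\pi$ has equal finite-dimensional source and target and is therefore an isomorphism; equivalently, $K \cap \tilde W f_i = 0$ for every $i$.

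The proof is closed by a socle argument. Since $\soc(e_jP) = e_j\soc(P) = \mathbf{k}f_j$ by Lemma~\ref{p-3.2.1}, we have $\soc(\tilde W) = \bigoplus_j \tilde W f_j$. For any $x \in \soc(K) = K \cap \soc(\tilde W)$, write $x = \sum_j x_j$ with $x_j \in \tilde Wf_j$; using $f_j e_l = \delta_{jl}f_j$, the component $x_j = xe_j$ lies in $K$ (as $K$ is a submodule), hence in $K \cap \tilde Wf_j = 0$. Thus $\soc(K) = 0$, forcing $K = 0$, so $\pi$ is an isomorphism, $W$ is projective, and the multiplicity of $e_iP$ in $W$ equals $n_i = \dim_{\mathbf{k}} W f_i$. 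The main obstacle is spotting the reformulation in the second step: once interlocking is recognised as the dimension equality $\dim_{\mathbf{k}} Wf_i = \dim_{\mathbf{k}} We_i/WJ(P)e_i$, the rest is bookkeeping against the projective cover together with a standard socle argument.
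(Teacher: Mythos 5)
Your proof is correct, and for the reverse implication it takes a genuinely different route from the paper. The paper first establishes Lemmas~\ref{p-3.1.2}--\ref{p-3.1.4} and then argues by induction on $\dim_{\mathbf{k}} Wf_i$: given a witness $v^{e_i}$ with $v^{e_i}f_i\neq 0$, the map $\theta\colon e_iP\to W$, $e_ip\mapsto v^{e_i}e_ip$ is injective because $\soc(e_iP)=\mathbf{k}f_i$ is the unique simple submodule; since $P$ is symmetric, $e_iP$ is also injective, so $\theta$ splits, and by Lemma~\ref{p-3.1.3} the complement is again interlocked with one fewer copy of $e_iP$. In contrast, you first recast the interlocking condition as the numerical equality $\dim_{\mathbf{k}}Wf_i=\dim_{\mathbf{k}}We_i/WJ(P)e_i$ (which is essentially Lemma~\ref{p-3.1.4} upgraded to an equivalence), then compare $W$ directly with its projective cover $\tilde W=\bigoplus_i(e_iP)^{n_i}$, and kill the kernel $K$ at once by showing $\soc(K)=0$. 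Both proofs ultimately rest on $\soc(e_iP)=\mathbf{k}f_i$ and $f_je_l=\delta_{jl}f_j$, but yours avoids the self-injectivity of $P$ (the only use of the symmetric hypothesis in this step of the paper) and avoids the induction; the paper's approach, on the other hand, makes the ``splitting off projective summands one at a time'' picture very explicit. Your reformulation of interlocking as a dimension count is a nice clarification and is what makes the projective-cover comparison click. Two tiny remarks: you mention $K\subseteq\tilde WJ(P)$ but never actually need it, since the socle argument alone forces $K=0$; and in the forward direction the kernel in the case $j=i$ should be written $e_iJ(P)$ (you wrote exactly that, so this is only a notational echo of the fact that $j=i$).
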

In order to prove this theorem, we first show the following lemmas.
\begin{lem}\label{p-3.1.2}
Any indecomposable projective module $e_i P$ for $1 \le i \le k$
is interlocked with $\phi$.
\end{lem}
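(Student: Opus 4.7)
The plan is as follows. Fix some index $\ell \in \{1,\ldots,k\}$ and set $W = e_\ell P$; to show $W$ is interlocked with $\phi$ I must verify, for each $i \in \{1,\ldots,k\}$, the equality $\ker(f_i|_W) = \sum_{\rho \in \Omega - \{e_i\}} W\rho$. The containment $\supseteq$ is already pointed out in the excerpt and reduces to checking that $\rho f_i = 0$ for every $\rho \in \Omega - \{e_i\}$. Writing $\rho = \rho_s^{ab}$ and using Lemma~\ref{p-3.2.1} in the form $e_b f_i = \delta_{bi} f_i$, the only nontrivial case is $b = i$; but then $\rho_s^{ai}$ lies in $J(P)$---either because $a \ne i$ makes $e_a P e_i = e_a J(P) e_i$ by \eqref{eq-3.0.1}, or because $a = i$ together with $\rho \ne e_i$ forces $\rho \in e_i J(P) e_i$ or $\rho = f_i \in \soc(P) \subseteq J(P)$ by Lemma~\ref{p-3.2.2}---and then the annihilation $J(P)\cdot\soc(P) = 0$ finishes the job.

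For the reverse inclusion $\subseteq$ I split on whether $\ell = i$. If $\ell \neq i$, the socle identity \eqref{eq-3.0.2} forces $W f_i \subseteq e_\ell \soc(P) e_i = 0$, so $\ker(f_i|_W) = W$; on the other hand, every element of the basis $\Omega_\ell = \{\rho_s^{\ell j}\}$ of $W$ is distinct from $e_i = \rho_0^{ii}$ (its upper index $\ell j$ has $\ell \ne i$) and hence lies in $\Omega - \{e_i\}$, while property (b) of Lemma~\ref{p-3.1} yields $\rho_s^{\ell j} = e_\ell \cdot \rho_s^{\ell j}$, placing it inside the right-hand sum. If $\ell = i$, expanding an arbitrary $w = \sum c_s^{ij} \rho_s^{ij} \in W$ and computing $w f_i$ term by term, the vanishing established in the first paragraph kills every contribution from $\rho_s^{ij} \ne e_i$ and leaves $w f_i = c_0^{ii} f_i$; hence $\ker(f_i|_W)$ is exactly the span of $\Omega_i \setminus \{e_i\}$, and each such basis vector $\rho_s^{ij}$ equals $e_i \cdot \rho_s^{ij}$ with $\rho_s^{ij} \in \Omega - \{e_i\}$.

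The main obstacle, such as it is, is disciplined index bookkeeping: one has to keep the module-fixing index $\ell$, the annihilator index $i$, and the basis superscripts $(a,b)$ separate, and correctly identify whether each $\rho_s^{ab}$ lives in $\mathbf{k} e_a$, in $e_a J(P) e_b$, or in $e_a \soc(P) e_b$ via \eqref{eq-3.0.1} and \eqref{eq-3.0.2}. Beyond that, the proof is a direct computation built on Lemma~\ref{p-3.1}, Lemma~\ref{p-3.2.1}, Lemma~\ref{p-3.2.2}, and the two-sided annihilation $J(P)\cdot \soc(P) = \soc(P)\cdot J(P) = 0$ coming from $\soc(P_P) = \soc({}_P P)$, with no deeper structural input required.
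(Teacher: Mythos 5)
Your proof is correct and takes essentially the same approach as the paper: you split into the cases $\ell = i$ and $\ell \neq i$, expand elements of $W = e_\ell P$ in the $\Omega$-basis, and compute that $w f_i$ detects only the coefficient of $e_i$, yielding the kernel; the paper does the same computation (expanding $p$ rather than $e_\ell p$, and using $\rho f_j = 0$ implicitly via the remark preceding the lemma). Your write-up is more explicit than the paper's about the justification of $\rho f_i = 0$ and about why $W$ equals the right-hand sum when $\ell \neq i$, but these are filling in small steps rather than changing the argument.
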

\begin{proof}
For $e_i p \in e_i P$,
suppose $e_i p f_i = 0$ and
express $p$ as $p = \sum_{\rho \in \Omega} a_{\rho} \rho$ with $a_{\rho} \in \mathbf{k}$.
Then 
$0 = e_i p f_i = e_i \sum_{\rho \in \Omega} a_{\rho} \rho f_i = a_{e_i} f_i$.
Thus $p$ belongs to the space spanned by $\Omega - \{e_i\}$,
which shows $e_i p \in \sum_{\rho \in \Omega - \{e_i\}} e_i P \rho$.

For $i \not= j$,
we can see $e_i p f_j = a_{e_j} e_i f_j = 0$ for all $p \in P$.
Thus we have $\ker(f_j) \subseteq e_i P = \sum_{\rho \in \Omega - \{e_j \}} e_i P \rho$.
\end{proof}
\begin{lem}\label{p-3.1.3}
The module $W_P$ is interlocked with $\phi$ if and only if any direct summand of $W_P$
is interlocked with $\phi$.
\end{lem}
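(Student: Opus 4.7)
The plan is to exploit the fact that both submodules appearing in the definition of interlockedness distribute over direct sum decompositions of $W$, while one of the two containments holds automatically. The ``if'' direction is immediate because $W$ is a direct summand of itself (take the complementary summand to be $0$).

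For the ``only if'' direction, I would take an arbitrary decomposition $W = V \oplus V'$ as right $P$-modules and verify, using only that $V$ and $V'$ are $P$-submodules, the two compatibility identities
\[
\ker_W(f_i) = \ker_V(f_i) \oplus \ker_{V'}(f_i), \qquad \sum_{\rho \in \Omega - \{e_i\}} W\rho = \Bigl(\sum_{\rho \in \Omega - \{e_i\}} V\rho\Bigr) \oplus \Bigl(\sum_{\rho \in \Omega - \{e_i\}} V'\rho\Bigr).
\]
Both are immediate: for any $a \in P$, the element $(v+v')a = va + v'a$ lies in $V \oplus V'$, so $wa = 0$ is equivalent to $va = 0$ and $v'a = 0$; the second identity just records $W\rho = V\rho \oplus V'\rho$ for each $\rho$. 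Combined with the automatic inclusion $\sum_{\rho \neq e_i} W\rho \subseteq \ker_W(f_i)$ noted immediately after the definition of interlockedness (which of course restricts to the analogous inclusions for $V$ and $V'$), equality of the two sides on $W$ is equivalent to equality on each of the summands. Since this holds for every $1 \le i \le k$, interlockedness of $W$ is equivalent to interlockedness of both $V$ and $V'$, and an obvious induction extends the conclusion to an arbitrary finite direct sum decomposition of $W$.

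There is essentially no obstacle: the lemma is the formal statement that interlockedness is an additive property of right $P$-modules, and the proof reduces entirely to the compatibility of the $P$-action with the projections onto the summands, together with the one-sided inclusion that is already recorded in the text.
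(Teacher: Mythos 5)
Your proposal is correct and follows essentially the same route as the paper: both proofs hinge on the direct sum decomposition $\sum_{\rho \in \Omega - \{e_i\}} W\rho = \bigl(\sum_{\rho} V\rho\bigr) \oplus \bigl(\sum_{\rho} V'\rho\bigr)$ together with the componentwise vanishing $wf_i = 0 \Leftrightarrow vf_i = 0$ and $v'f_i = 0$, and both establish the stronger bidirectional fact that $W$ is interlocked if and only if each summand is. The only cosmetic difference is that you package the argument as ``two filtered subspaces that each split over the decomposition and satisfy a one-sided inclusion are equal iff they are equal on each summand,'' whereas the paper chases elements directly; the content is identical.
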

\begin{proof}
Suppose that $W_P = W_1 \oplus W_2$ where $W_1$ and $W_2$ are right $P$-modules.
Then we have
\begin{align}
\sum_{\rho \in \Omega - \{e_i\}} W \rho = (\sum_{\rho \in \Omega - \{e_i\}}
W_1 \rho ) \oplus (\sum_{\rho \in \Omega - \{e_i\}} W_2 \rho). \label{eq-3.1.1}
\end{align}
If $W_P$ is interlocked with $\phi$ and $w f_i = 0$ for $w \in W_1$,
we see $w \in \sum_{\rho \in \Omega - \{e_i\}}
W_1 \rho$ by \eqref{eq-3.1.1}.
Similarly $W_2$ is  interlocked with $\phi$.

Conversely, suppose that $W_1$ and $W_2$ are interlocked with $\phi$
and $(w_1+w_2) f_i = 0$ for $w_1 \in W_1$ and $w_2 \in W_2$.
Then
we have $w_1 f_i = 0$ and $w_2 f_i = 0$,
which shows that $w_1 + w_2 \in \sum_{\rho \in \Omega - \{e_i\}} W \rho$
by \eqref{eq-3.1.1}.
Therefore we conclude that $W_P$ is interlocked with $\phi$.
\end{proof}

\begin{lem}\label{p-3.1.4}
Assume that $W_P$ is interlocked with $\phi$.
Then
\begin{align}
W e_i / W J(P) e_i \cong W f_i, \  \overline{w e_i} \mapsto wf_i \notag
\end{align}
for any $1 \le i \le k$. 
\end{lem}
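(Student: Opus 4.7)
The plan is to define the candidate map directly and verify well-definedness, surjectivity, and the identification of its kernel with $WJ(P)e_i$; the isomorphism will then follow from the first isomorphism theorem.

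First I would set $\psi: We_i \to Wf_i$ by $we_i \mapsto (we_i)f_i$. This is literally right multiplication by $f_i$ restricted to $We_i$, so it is well-defined and $\mathbf{k}$-linear without further argument. Using Lemma~\ref{p-3.2.1}, $e_i f_i = f_i$, so $\psi(we_i) = we_i f_i = w f_i$; and for surjectivity, any $wf_i \in Wf_i$ equals $w(e_i f_i) = (we_i)f_i = \psi(we_i)$.

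Next I would check that $WJ(P)e_i \subseteq \ker \psi$. The key observation is that $J(P)\cdot \soc(P) = 0$: this is immediate from the fact (established in the text) that $\soc(P_P) = \soc({}_PP)$, since $\soc({}_PP) = \{a \in P : J(P)a = 0\}$. Hence $J(P)f_i = 0$, and applying $\psi$ to $w j e_i$ (with $j \in J(P)$) gives $wje_if_i = wjf_i = 0$.

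The main step, and the only place the interlocking hypothesis enters, is the reverse inclusion $\ker \psi \subseteq WJ(P)e_i$. Take $we_i \in \ker \psi$, so that $we_i f_i = wf_i = 0$, which says $w \in \ker(f_i)$. By the interlocking property, $w = \sum_{\rho \in \Omega - \{e_i\}} w_\rho \rho$ for some $w_\rho \in W$. Multiplying on the right by $e_i$, split the sum according to whether $\rho$ is an idempotent $e_j$ (necessarily $j \neq i$) or lies in $\Omega - \{e_1, \dots, e_k\} \subseteq J(P)$: in the first case $\rho e_i = e_j e_i = 0$, while in the second $\rho e_i \in J(P)e_i$. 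Therefore $we_i \in WJ(P)e_i$, which completes the kernel computation, and the first isomorphism theorem then yields the displayed isomorphism with the stated formula $\overline{we_i} \mapsto wf_i$.

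The only mildly subtle point is keeping track of the two roles played by $f_i$ and $e_i$ in the decomposition of $\Omega$; everything else is a direct unpacking of the definitions and of Lemma~\ref{p-3.2.1}.
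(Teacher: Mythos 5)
Your proof is correct and takes essentially the same approach as the paper's, which merely asserts (in one sentence) that the kernel of the map $we_i \mapsto wf_i$ equals $\sum_{\rho \in \Omega - \{e_i\}} W\rho e_i = WJ(P)e_i$ by the interlocking hypothesis. You have simply unpacked that sentence: checking well-definedness and surjectivity via $e_if_i = f_i$, establishing $WJ(P)e_i \subseteq \ker\psi$ from $J(P)\soc(P) = 0$, and deriving the reverse inclusion from the interlocking condition together with the facts that $e_je_i = 0$ for $j \neq i$ and $\Omega - \{e_1,\dots,e_k\}$ spans $J(P)$.
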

\begin{proof}
The kernel of the map
$W e_i \to W f_i, \ we_i \mapsto wf_i$
is equal to $\sum_{\rho \in \Omega - \{e_i\}} W \rho e_i = W J(P) e_i$
since $W_P$ is interlocked with $\phi$.
\end{proof}

\begin{proof}[Proof of Theorem \ref{p-3.1.1}]
By Lemma \ref{p-3.1.2} and Lemma \ref{p-3.1.3},
any finite direct sum of indecomposable projective modules is interlocked with $\phi$.

Conversely, suppose that $W_P$ is interlocked with $\phi$.
By Lemma \ref{p-3.1.4}, there exists $v^{e_i}$
such that $v^{e_i} f_i \not= 0$ if $\dim_{\mathbf{k}} W f_i \not=0$.
Then the map
\begin{align}
\theta : e_i P \to W, \ e_i p \mapsto v^{e_i} e_i p, \notag
\end{align}
is a $P$-homomorphism.
Suppose $\ker(\theta) \not=0$.
Note that $\soc (e_i P) = \mathbf{k} f_i$
by Lemma \ref{p-3.2.1}.
Since $e_i P$ has the unique simple submodule $\soc (e_i P)$
(see \cite[Proposition 9.9 (ii)]{CR})
we have $f_i \in \ker (\theta)$ and $v^{e_i} f_i = 0$.
This is a contradiction.
Thus $\theta$ is injective.

Since $P$ is a symmetric algebra, any projective module is also injective
(see \cite[Proposition 9.9 (iii)]{CR}).
Therefore $\theta$ is split and then $e_i P$ is a direct summand of $W$,
say, $W \cong e_iP \oplus W^{\prime}$.
By Lemma \ref{p-3.1.1}, $W^{\prime}$ is also interlocked with $\phi$
and $\dim_{\mathbf{k}} W^{\prime} f_i = \dim_{\mathbf{k}} Wf_i - 1$ since $\dim_{\mathbf{k}} e_i P f_i = 1$.
If $W f_i = 0$ for all $1 \le i \le k$,
then $w f_i = 0$ for all $w \in W$ and $1 \le i \le k$.
Thus we have
\begin{align}
W = \bigcap_{i=1}^{k} \left(\sum_{\rho \in \Omega - \{e_i\}} W \rho \right) = W J(P). \notag
\end{align}
By Nakayama's lemma, we have $W = 0$.

Therefore the induction on $\dim_{\mathbf{k}} W f_i$ proves
 the theorem.
In particular, the multiplicity of $e_i P$ in $W$
is equal to $\dim_{\mathbf{k}} W f_i$ for all $1 \le i \le k$.
\end{proof}

Assume that $W_P$ is finitely generated and projective.
Then $W_P$ is isomorphic to a finite direct sum of indecomposable projective modules:
\begin{align}
W_P \cong \bigoplus_{i=1}^{k} n_i e_i P, \label{eq-3.0}
\end{align}
where $n_i$ is the multiplicity of $e_i P$, that is, $n_i = \dim_{\mathbf{k}} W f_i$.
We denote the element of $W_P$ corresponding to  $e_i$
by $v_j^{e_i}$ for $1 \le i \le k$ and $1 \le j \le n_i$.
Note that $W_P$ has a basis 
$\{v_j^{e_i} \rho \ | \ \rho \in \Omega_i, \ 1 \le i \le k, \ 1 \le j \le n_i\}$. 

Since 
$\alpha (v_j^{e_i}) = \alpha (v_j^{e_u} e_i) = \alpha (v_j^{e_i}) e_i \in W e_i$
for $\alpha \in \End_P(W_P)$ and Lemma \ref{p-3.1} (b), 
we have
\begin{align}
\alpha (v_j^{e_i}) = \sum_{s=1}^{k} \sum_{t=1}^{n_s} \sum_{\rho^{si} \in \Omega_{si}} \alpha_{jt}^{\rho^{si}}
v_t^{e_s} \rho^{si} \label{eq-3.1}
\end{align}
for $1 \le i \le k$ and $1 \le j \le n_i$
where $\alpha_{jt}^{\rho^{si}} \in \mathbf{k}$.
In \cite{Mi},
the pseudotrace map $\tr_{W_P}^{\phi}$ on $\End_P(W_P)$ is defined by
\begin{align}
\tr_{W_P}^{\phi} (\alpha) = \sum_{i=1}^{k}\sum_{j=1}^{n_i} \alpha_{jj}^{f_i}. \label{eq-3.2}
\end{align}

In order to show that the pseudotrace map coincides with $\phi_{W_P}$,
we choose the following $P$-coordinate system of $W_P$.
Note that $\phi_{W_P}$ does not depend on the choice of $P$-coordinate systems.

Set
\begin{align}
\alpha_j^{i} (v_t^{e_s} \rho^{s p}) = \begin{cases}
\rho^{ip}, & i=s, \ j=t, \\
0, & \text{otherwise}, 
\end{cases}\notag
\end{align}
for $1 \le i \le k$ and $1 \le j \le n_i$.
Then $\alpha_{j}^i$ belongs to $\Hom_P(W_P, P)$ for $1 \le i \le k$ and $1 \le j \le n_i$.
\begin{lem}\label{p-3.2}
The set
$\{v_j^{e_i}, \alpha_j^i \ | \ 1 \le i \le k, \ 1 \le j \le n_i\}$
is a $P$-coordinate system of $W_P$.
\end{lem}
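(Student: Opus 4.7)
The plan is to verify the two defining properties of a $P$-coordinate system: that each $\alpha_j^i$ belongs to $\Hom_P(W_P, P)$, and that the reconstruction identity $w = \sum_{i=1}^{k}\sum_{j=1}^{n_i} v_j^{e_i}\,\alpha_j^i(w)$ holds for every $w \in W_P$. The key fact I will exploit is that, by the preceding paragraph, $\{v_j^{e_i}\rho \mid \rho \in \Omega_i,\ 1 \le i \le k,\ 1 \le j \le n_i\}$ is a $\mathbf{k}$-basis of $W_P$, so both properties reduce to checks on basis elements.

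For the first property, I would argue conceptually that $\alpha_j^i$ is the composition of the projection $W_P \twoheadrightarrow v_j^{e_i}P$ onto the $(i,j)$-th summand of the decomposition $W_P \cong \bigoplus_{s,t} v_t^{e_s} P$, followed by the $P$-isomorphism $v_j^{e_i}P \xrightarrow{\sim} e_iP \hookrightarrow P$ sending $v_j^{e_i} \mapsto e_i$. Equivalently, one checks directly that the given $\mathbf{k}$-linear formula coincides with the $P$-linear map determined by $v_t^{e_s} \mapsto \delta_{is}\delta_{jt}\, e_i$; the equality $e_s \rho^{sp} = \rho^{sp}$, which follows from Lemma \ref{p-3.1}(b), is what reconciles the two descriptions.

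For the second property, I would evaluate the right-hand side on a generic basis element $w = v_t^{e_s}\rho^{sp}$. By the definition of $\alpha_j^i$, every term in $\sum_{i,j} v_j^{e_i}\,\alpha_j^i(v_t^{e_s}\rho^{sp})$ vanishes except the one with $(i,j) = (s,t)$, and that surviving term is exactly $v_t^{e_s}\rho^{sp} = w$. Extending $\mathbf{k}$-linearly then gives the identity on all of $W_P$.

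No step presents a serious obstacle: the statement is essentially the observation that $\{\alpha_j^i\}$ is the dual family to the generators $\{v_j^{e_i}\}$ with respect to the chosen direct-sum decomposition of $W_P$. The only point that requires any thought is the $P$-linearity of $\alpha_j^i$, and this is secured once one recognises the formula as the ``extract the $\rho_0^{si}$-coefficient of the $(s,t)$-summand, then transport it to $P$'' functional.
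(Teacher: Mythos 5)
Your argument is correct and follows essentially the same route as the paper: verify the reconstruction identity on the basis $\{v_t^{e_s}\rho^{sp}\}$ and extend linearly, noting that all terms but the $(s,t)$ one vanish by definition of $\alpha_j^i$. You additionally spell out why $\alpha_j^i \in \Hom_P(W_P,P)$ (projection onto the cyclic summand $v_j^{e_i}P \cong e_iP$ followed by inclusion into $P$), a point the paper asserts just before the lemma without further comment; your justification is sound and relies, as it must, on Lemma~\ref{p-3.1}~\eqref{p-3.1.b}.
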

\begin{proof}
By the definitions of  $v_j^{e_i}$
and  $\alpha_{j}^i$, we have  
$v_j^{e_i} \rho^{ip} 
= v_j^{e_i} \alpha_j^i (v_j^{e_i} \rho^{ip}) 
= \sum_{s=1}^{k}\sum_{t=1}^{n_s} v_t^{e_s} \alpha_t^s(v_j^{e_i} \rho^{ip})$.
Since the elements $v_j^{e_i} \rho^{ip}$
form a basis of $W_P$, we have shown the lemma.
\end{proof}

\begin{thm}\label{p-3.3.3.1}
Assume that
$W_P$ is finitely generated and projective.
Then $\phi_{W_P} = \tr_{W_P}^{\phi}$.
\end{thm}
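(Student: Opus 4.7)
The plan is to compute $\phi_{W_P}(\alpha)$ directly by means of the $P$-coordinate system $\{v_j^{e_i},\alpha_j^i\}$ constructed in Lemma \ref{p-3.2} and to show that the answer coincides term-by-term with the expression \eqref{eq-3.2} defining the pseudotrace map.

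First, I would apply the expansion \eqref{eq-3.1} to obtain
\[
\alpha_j^i \circ \alpha(v_j^{e_i}) = \sum_{s,t,\rho^{si}} \alpha_{jt}^{\rho^{si}} \, \alpha_j^i(v_t^{e_s}\rho^{si}).
\]
By the very definition of $\alpha_j^i$, the summand $\alpha_j^i(v_t^{e_s}\rho^{si})$ vanishes unless $s=i$ and $t=j$, in which case it equals $\rho^{ii}$. So after summing over $i,j$ we get
\[
\sum_{i=1}^{k}\sum_{j=1}^{n_i} \alpha_j^i\circ\alpha(v_j^{e_i}) \;=\; \sum_{i=1}^{k}\sum_{j=1}^{n_i}\sum_{\rho^{ii}\in\Omega_{ii}} \alpha_{jj}^{\rho^{ii}}\,\rho^{ii}.
\]
Applying $\phi$ reduces the problem to evaluating $\phi(\rho^{ii})$ for each basis element $\rho^{ii}\in\Omega_{ii}$.

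The key point is then to verify that $\phi(\rho^{ii}) = \delta_{\rho^{ii},f_i}$. Writing $\phi(x)=\langle x, 1\rangle = \sum_{\ell=1}^{k}\langle x, e_\ell\rangle$ and using Lemma \ref{p-3.1}\eqref{p-3.1.c} with $e_\ell = \rho_0^{\ell\ell}$, the pairing $\langle \rho_s^{ii}, e_\ell\rangle$ is nonzero only when $\ell = i$ and $s = d_{ii}+1$, i.e.\ when $\rho_s^{ii} = f_i$; the additional assumption $\phi(e_i)=0$ handles the excluded case $s=0$. Therefore
\[
\phi\Bigl(\sum_{i,j,\rho^{ii}} \alpha_{jj}^{\rho^{ii}}\,\rho^{ii}\Bigr) = \sum_{i=1}^{k}\sum_{j=1}^{n_i}\alpha_{jj}^{f_i} = \tr_{W_P}^{\phi}(\alpha),
\]
proving the theorem.

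The only conceptual step is the computation $\phi(\rho^{ii})=\delta_{\rho^{ii},f_i}$; everything else is bookkeeping with the chosen $P$-coordinate system. Since Proposition \ref{p-1.2.1} ensures $\phi_{W_P}$ is well-defined independently of the coordinate system, no additional checking is needed. The main obstacle, such as it is, lies in choosing a $P$-coordinate system adapted to the basis $\Omega$ so that the matrix coefficients $\alpha_{jj}^{f_i}$ appear unadorned; this is exactly what Lemma \ref{p-3.2} arranges.
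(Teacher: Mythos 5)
Your argument is the same as the paper's: fix the $P$-coordinate system $\{v_j^{e_i},\alpha_j^i\}$ from Lemma \ref{p-3.2}, expand $\alpha(v_j^{e_i})$ via \eqref{eq-3.1}, note that $\alpha_j^i$ kills all terms except the $(s,t)=(i,j)$ ones, and then apply $\phi$ term by term using Lemma \ref{p-3.1}\eqref{p-3.1.c} to pick out the coefficient $\alpha_{jj}^{f_i}$. The only difference is that you spell out the last step $\phi(\rho^{ii})=\delta_{\rho^{ii},f_i}$ via $\phi(x)=\langle x,1\rangle=\sum_\ell\langle x,e_\ell\rangle$, which the paper leaves implicit; this is a harmless elaboration, and your side remark about $\phi(e_i)=0$ is in fact already a consequence of \eqref{p-3.1.c} with $s=0$, $t=d_{ii}+1$.
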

\begin{proof}
For $\alpha \in \End_P(W_P)$,
one has
\begin{align}
\phi_{W_P}(\alpha) &= \phi \left(\sum_{i=1}^{k} \sum_{j=1}^{n_i}
\alpha_j^{i} \circ \alpha (v_j^{e_i}) \right) \notag\\
&= \phi \left(\sum_{i, s=1}^{k} \sum_{j=1}^{n_i} \sum_{t=1}^{n_s} \sum_{\rho^{si}\in \Omega_{si}}
\alpha_j^i (\alpha_{jt}^{\rho^{si}} v_{t}^{e_s} \rho^{si})\right) \notag\\
&= \sum_{i=1}^{k} \sum_{j=1}^{n_i} \sum_{\rho^{ii} \in \Omega_{ii}} \alpha_{jj}^{\rho^{ii}}
\phi (\rho^{ii}) \notag\\
&= \sum_{i=1}^{k} \sum_{j=1}^{n_i} \alpha_{jj}^{f_i} \notag\\
&= \tr_{W_P}^{\phi} (\alpha), \notag
\end{align}
since \eqref{eq-3.1} and Lemma \ref{p-3.1} \eqref{p-3.1.c}.
\end{proof}

\section{The center and symmetric linear functions}

In this section, we assume that the finite-dimensional 
$\mathbf{k}$-algebra $A$ contains a nonzero central element $\nu$ such that
$(\nu - r)^s A = 0$ and $(\nu - r)^{s-1} A \not=0$ for some $r \in \mathbf{k}$
and $s \in \mathbb{Z}_{> 0}$.

Set $\mathcal{K} = \{a \in A \ | \ (\nu - r) a = 0\}$.
Note that $\mathcal{K}$ is a two-sided ideal of $A$.
Let $\alpha : M_A \to N_A$ be an $A$-module homomorphism.
Then $M / M \mathcal{K}$ is an $A / \mathcal{K}$-module
and the map $\widehat{\alpha} : M / M \mathcal{K} \to N / N \mathcal{K}$
defined by $\widehat{\alpha}(\overline{m}) = \overline{\alpha(m)}$
is an $A / \mathcal{K}$-module homomorphism
where $\overline{m}$ is the image of $m$ under the canonical map $M \to M / M\mathcal{K}$.
Assume that $W_A$ is finitely generated and projective and
let $\{u_i, \alpha_i\}_{i=1}^{n}$ be an $A$-coordinate system of $W_A$.
Then $\{\overline{u}_i, \widehat{\alpha}_i\}_{i=1}^{n}$
is an $A/ \mathcal{K}$-coordinate system of the right $A / \mathcal{K}$-module 
$W / W\mathcal{K}$.

Let $\phi$ be a symmetric linear function on $A$.
Then  $\phi^{\prime} (\overline{a}) = \phi ((\nu - r) a)$ for any $\overline{a} 
\in A / \mathcal{K}$ is well-defined and symmetric on $A / \mathcal{K}$.

\begin{prop}[c.f. \cite{Mi}, Proposition 3.8]\label{p-1.3.0}
Assume that $W_A$ is finitely generated and projective.
Let $\phi$ be a symmetric linear function on $A$.
Then
\begin{align}
\phi_{W_A} (\alpha \circ (\nu - r)) = 
 \phi^{\prime}_{W/W\mathcal{K}}(\widehat{\alpha}) \notag
\end{align}
for all $\alpha \in \End_A(W_A)$ where $\nu - r$ is identified as an element of $\End_A(W_A)$.
\end{prop}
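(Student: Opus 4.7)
The plan is to compute both sides of the asserted identity using a single $A$-coordinate system of $W_A$ and observe that they produce the same element of $\mathbf{k}$. Fix an $A$-coordinate system $\{u_i, \alpha_i\}_{i=1}^{n}$ of $W_A$; by the remark preceding the proposition, $\{\bar{u}_i, \widehat{\alpha}_i\}_{i=1}^{n}$ is then an $A/\mathcal{K}$-coordinate system of $(W/W\mathcal{K})_{A/\mathcal{K}}$, so both sides are computed from the same data.

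For the left-hand side, I would unfold the definition of $\phi_{W_A}$ and use that $\nu - r$ is central and that $\alpha, \alpha_i$ are $A$-linear:
\begin{align}
\phi_{W_A}\bigl(\alpha \circ (\nu-r)\bigr)
&= \phi\Bigl(\sum_{i=1}^{n} \alpha_i\bigl(\alpha(u_i (\nu-r))\bigr)\Bigr) \notag\\
&= \phi\Bigl(\sum_{i=1}^{n} \alpha_i\bigl(\alpha(u_i)\bigr)(\nu-r)\Bigr)
= \phi\Bigl((\nu-r)\sum_{i=1}^{n} \alpha_i(\alpha(u_i))\Bigr), \notag
\end{align}
where the last equality uses centrality of $\nu$ inside $\phi$.

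For the right-hand side, I would chase $\widehat{\alpha}$ through the coordinate system: $\widehat{\alpha}(\bar{u}_i) = \overline{\alpha(u_i)}$ and hence $\widehat{\alpha}_i \circ \widehat{\alpha}(\bar{u}_i) = \overline{\alpha_i(\alpha(u_i))}$, so by the definition of $\phi'$ on $A/\mathcal{K}$,
\begin{align}
\phi'_{W/W\mathcal{K}}(\widehat{\alpha})
= \phi'\Bigl(\overline{\textstyle\sum_{i=1}^{n} \alpha_i(\alpha(u_i))}\Bigr)
= \phi\Bigl((\nu-r)\sum_{i=1}^{n} \alpha_i(\alpha(u_i))\Bigr). \notag
\end{align}
Matching the two expressions yields the proposition. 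The only delicate point is the manipulation in the first display: one must verify that $(\nu-r)$ can be pulled out of the sum and placed on the left inside $\phi$, which follows from the centrality of $\nu$ combined with the $A$-linearity of the coordinate functionals $\alpha_i$; no further obstacle is expected.
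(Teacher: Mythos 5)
Your proposal is correct and follows essentially the same route as the paper: fix one $A$-coordinate system, observe the induced $A/\mathcal{K}$-coordinate system of $W/W\mathcal{K}$, expand both $\phi_{W_A}(\alpha\circ(\nu-r))$ and $\phi'_{W/W\mathcal{K}}(\widehat\alpha)$ using $A$-linearity of $\alpha$ and the $\alpha_i$, the definition of $\phi'$, and the centrality of $\nu$. The paper simply runs the chain in one direction (unwinding $\phi'_{W/W\mathcal{K}}(\widehat\alpha)$ until it reads $\phi_{W_A}(\alpha\circ(\nu-r))$) rather than computing both sides and matching them, which is a purely cosmetic difference.
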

\begin{proof}
Let $\{u_i, \alpha_i\}_{i=1}^{n}$ be an $A$-coordinate system of $W_A$.
Then we have
\begin{align}
\phi^{\prime}_{W/W\mathcal{K}} (\widehat{\alpha})
&= \phi^{\prime}\left(\sum_{i=1}^{n} \widehat{\alpha}_i \circ \widehat{\alpha} 
 (\overline{u}_i)\right) \notag\\
&= \phi \left((\nu-r) \sum_{i=1}^{n} \alpha_i \circ \alpha (u_i)\right) \notag\\
&= \phi \left(\sum_{i=1}^{n} \alpha_i \circ \alpha (u_i (\nu - r))\right) \notag\\
&= \phi_{W_A}(\alpha \circ (\nu - r)). \notag
\end{align}
\end{proof}

\section{Basic algebras and symmetric linear functions}
Let 
\begin{align}
1 = \sum_{i=1}^{n} \sum_{j = 1}^{n_i} e_{ij} \label{eq:1.4.1}
\end{align}
be a decomposition of the unity $1$ by mutually orthogonal primitive idempotents
where $e_{ij} A \cong e_{ik} A$ and
$e_{ij} A \not\cong e_{k \ell} A$ for $i \not=k$.
Set $e_i = e_{i1}$ for $1 \le i \le n$
and $e = \sum_{i=1}^{n} e_i$.
Then $\mathbf{k}$-algebra $eAe$ with the unity $e$ is called a {\em basic algebra} associated
with $A$.
Then $Ae$ is $(A, eAe)$-bimodule.
Let
$\ell : A \to \End_{eAe} (Ae_{eAe})$ and 
$r : eAe \to \End_A ({}_AAe)$
be maps defined by
$\ell (a) (be) = abe$ for all $a, b \in A$ and 
$r (eae) (be) = beae$ for all $a, b \in A$. 

\begin{lem}[\cite{AF}, Proposition 4.15,  Theorem 17.8]\label{p-1.4.2}
\begin{enumerate}
\item\label{p-1.4.2.a}
The map $r$
is an anti-isomorphism of algebras.
\item\label{p-1.4.2.b}
The map $\ell$ is an isomorphism
of algebras
\end{enumerate}
\end{lem}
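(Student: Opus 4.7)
The plan is to verify the algebraic properties of $\ell$ and $r$ directly and then establish bijectivity by exploiting the key fact that $e$ is a full idempotent, namely $AeA = A$. That both maps respect the (anti-)multiplicative structure follows immediately from associativity: for $eae, ebe \in eAe$ and $ce \in Ae$,
\begin{align*}
r(eae \cdot ebe)(ce) = ce \cdot eaebe = (ceae) \cdot ebe = (r(ebe) \circ r(eae))(ce),
\end{align*}
and similarly $\ell(ab) = \ell(a) \circ \ell(b)$. Injectivity of $r$ is immediate from evaluating at $e$: $r(eae)(e) = eae$, so $r(eae) = 0$ forces $eae = 0$. For surjectivity of $r$, given $\varphi \in \End_A({}_AAe)$, left $A$-linearity yields $\varphi(e) = \varphi(e \cdot e) = e\varphi(e) \in Ae \cap eA = eAe$; writing $\varphi(e) = eae$ one obtains $\varphi(be) = b \cdot eae = r(eae)(be)$ for every $b \in A$.

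The crucial input for part (b) is $AeA = A$. To see this, for each primitive idempotent $e_{ij}$ in \eqref{eq:1.4.1}, the right $A$-module isomorphism $e_i A \xrightarrow{\sim} e_{ij} A$ is implemented by left multiplication by some $x_{ij} \in e_{ij} A e_i$ whose inverse is left multiplication by some $y_{ij} \in e_i A e_{ij}$; in particular $x_{ij} y_{ij} = e_{ij}$. Since $e_i = e \cdot e_i$, the element $x_{ij}$ lies in $A \cdot e \cdot e_i \subseteq AeA$, so $e_{ij} \in AeA$, and summing over \eqref{eq:1.4.1} gives $1 \in AeA$. Injectivity of $\ell$ is then immediate: $\ell(a) = 0$ implies $aAe = 0$, so $aA = aAeA = 0$, whence $a = 0$.

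Surjectivity of $\ell$ is the main obstacle. The strategy is to use the decomposition $Ae = \bigoplus_{i,j} e_{ij}Ae$ as right $eAe$-modules, together with the right $eAe$-module isomorphisms $e_{ij} Ae \xrightarrow{\sim} e_i \cdot eAe$ given by left multiplication by $y_{ij}$ (with inverse left multiplication by $x_{ij}$), to identify $\End_{eAe}(Ae_{eAe})$ with a generalized matrix ring over the spaces $e_{i'} A e_i \cong \Hom_{eAe}(e_i \cdot eAe, e_{i'} \cdot eAe)$. Tracking the identifications, this generalized matrix ring matches, under the same $x_{ij}, y_{ij}$, the analogous decomposition $A = \bigoplus_{i,j,i',j'} e_{ij} A e_{i'j'}$, and the resulting bijection is precisely $\ell$. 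This is essentially the content of Morita equivalence attached to the full idempotent $e$, the form in which it appears in \cite[Proposition 4.15, Theorem 17.8]{AF}.
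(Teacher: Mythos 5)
The paper gives no proof of this lemma — it defers entirely to \cite{AF} — so there is nothing to compare against line by line; I am judging your argument on its own. Part (a) is complete and correct as written. For part (b), you have correctly isolated the essential fact that $e$ is a \emph{full} idempotent, $AeA = A$, and your derivation of it from the data $x_{ij} \in e_{ij}Ae_i$, $y_{ij} \in e_iAe_{ij}$ with $x_{ij}y_{ij} = e_{ij}$ is right. The injectivity of $\ell$ follows cleanly. The surjectivity of $\ell$, however, you only sketch via the generalized-matrix-ring identification of $\End_{eAe}(Ae_{eAe})$; that route does work (and is in the spirit of the cited [AF, Theorem 17.8]), but as written the final step, ``the resulting bijection is precisely $\ell$,'' is an assertion rather than an argument, and making it precise requires tracking the maps carefully or supplementing with a dimension count.

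You could close that gap with a much shorter direct argument once you have $1 \in AeA$: write $1 = \sum_k u_k e v_k$, and for $\psi \in \End_{eAe}(Ae_{eAe})$ set $a = \sum_k \psi(u_k e)\, v_k$. Then for any $b \in A$,
\begin{align}
\psi(be) = \psi\Bigl(\sum_k u_k e\,(e v_k b e)\Bigr) = \sum_k \psi(u_k e)\,(e v_k b e) = \sum_k \psi(u_k e)\, v_k b e = abe = \ell(a)(be), \notag
\end{align}
using right $eAe$-linearity of $\psi$ and $\psi(u_k e) \in Ae$ to absorb the extra $e$. So $\ell(a) = \psi$, giving surjectivity without the bookkeeping. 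With this replacement your proof is complete and self-contained; as it stands it is correct in outline but has one hand-waved step.
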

By Lemma \ref{p-1.4.2}, an element $a \in A$ is identified as an element in $\End_{eAe}(Ae)$
and an element $eae \in eAe$ is identified as an element in $\End_{A}(Ae)$. 

\begin{rem}
By Lemma \ref{p-1.4.2}, we have two linear maps
\begin{align}
&(-)_{Ae_{eAe}} : \SLF(eAe) \to \SLF(A), \
(-)_{{}_AAe} : \SLF(A) \to \SLF((eAe)^{\operatorname{op}}). \notag
\end{align}
Since $\SLF(eAe) = \SLF((eAe)^{\operatorname{op}})$,
the second map is in fact a map $\SLF(A) \to \SLF(eAe)$.
\end{rem}

By \eqref{eq:1.4.1}, we have 
\begin{align}
A e = \bigoplus_{i=1}^{n} \bigoplus_{j=1}^{n_i} e_{ij} A e. \label{eq:1.4.2}
\end{align}
The following fact is well-known.

\begin{lem}\label{p-1.4.0}
Let $e$ and $f$ be idempotents of $A$.
Then the following assertions are equivalent.
\begin{enumerate}
\item\label{p-1.4.0.a}
$A e \cong A f$.
\item\label{p-1.4.b}
$e A \cong f A$.
\item\label{p-1.4.0.c}
There exist $p \in e A f$ and $q \in f A e$ such that
$pq = e$ and $qp = f$.
\end{enumerate}
\end{lem}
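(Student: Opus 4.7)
The plan is to prove the two equivalences $(a)\Leftrightarrow (c)$ and $(b)\Leftrightarrow (c)$ separately, both via the classical Hom-idempotent identification. The key observation is that for any left $A$-module homomorphism $\varphi:Ae\to Af$, the element $p=\varphi(e)$ automatically lies in $eAf$: from $\varphi(e)\in Af$ one has $\varphi(e)f=\varphi(e)$, and from left $A$-linearity applied to $e=e\cdot e$ one has $\varphi(e)=e\varphi(e)$. Conversely, right multiplication by any element $p\in eAf$ defines a left $A$-linear map $Ae\to Af$, and these two assignments are mutually inverse, identifying $\Hom_A({}_AAe,{}_AAf)$ with $eAf$.

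Granted this, the implication $(c)\Rightarrow(a)$ is immediate: given $p$ and $q$ as in $(c)$, right multiplication by $p$ and by $q$ defines left $A$-linear maps $\varphi:Ae\to Af$ and $\psi:Af\to Ae$, and using $xe=x$ on $Ae$ and $yf=y$ on $Af$ the identities $pq=e$ and $qp=f$ force $\psi\varphi$ and $\varphi\psi$ to be the respective identities. For $(a)\Rightarrow(c)$, from an isomorphism $\varphi:Ae\to Af$ I set $p=\varphi(e)\in eAf$ and $q=\varphi^{-1}(f)\in fAe$. Left $A$-linearity gives $\varphi(x)=xp$ on $Ae$ and $\varphi^{-1}(y)=yq$ on $Af$, so $pq=\varphi^{-1}(p)=\varphi^{-1}(\varphi(e))=e$ and $qp=\varphi(q)=\varphi(\varphi^{-1}(f))=f$.

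The equivalence $(b)\Leftrightarrow(c)$ is proved in exactly the same way with left and right reversed: right $A$-module homomorphisms $eA\to fA$ correspond bijectively with elements of $fAe$ via $\psi\mapsto\psi(e)$, an isomorphism yields a pair in $fAe\times eAf$ with products $f$ and $e$ respectively, and swapping the roles of the two elements matches $(c)$. The whole argument is routine bookkeeping; the only delicate point is tracking that $\varphi(e)$ lies in $eAf$ rather than merely in $Af$, which requires both the source identity $\varphi(e)=e\varphi(e)$ and the target identity $\varphi(e)f=\varphi(e)$. Since the statement is classical and cited from \cite{AF}, I expect a concise proof along these lines is what the author intends.
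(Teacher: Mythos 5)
The paper offers no proof of this lemma; it is stated as ``well-known'' and left to the reader. Your argument --- the standard identification $\Hom_A({}_AAe,{}_AAf)\cong eAf$ via $\varphi\mapsto\varphi(e)$ (noting $\varphi(e)=e\varphi(e)$ by left $A$-linearity and $\varphi(e)=\varphi(e)f$ since the target is $Af$), together with its right-module analogue $\Hom_A(eA_A,fA_A)\cong fAe$ --- is correct and complete, and is exactly the classical proof one would supply here. The bookkeeping in the $(b)\Leftrightarrow(c)$ direction checks out: from an isomorphism $\psi:eA\to fA$ you get $\psi(e)\in fAe$ and $\psi^{-1}(f)\in eAf$ with $\psi^{-1}(f)\,\psi(e)=e$ and $\psi(e)\,\psi^{-1}(f)=f$, which, after labelling $p=\psi^{-1}(f)$ and $q=\psi(e)$, is precisely condition $(c)$.
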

%%\begin{proof}
%%Suppose that $\alpha : A e \to A f$ is an isomorphism.
%%Set $\alpha (e) = p \in A f$.
%%Then we have $p = \alpha (e) = e \alpha(e) = e p \in e A f $.
%%Similarly we set $\alpha^{-1} (f) = q \in A e$.
%%Then we can see that $q = \alpha^{-1} (f) = f \alpha^{-1} (f) = f q \in fAe$.
%%Thus we have
%%\begin{align}
%%&e = \alpha^{-1} \circ \alpha (e) = \alpha^{-1}(p) = p \alpha^{-1}(f) = pq, \notag\\
%%&f = \alpha \circ \alpha^{-1} (f) = \alpha (q) = q \alpha (e) = qp. \notag
%%\end{align}
%
%Conversely we assume that there exists
%$p \in eAf$ and $q \in fAe$ such that $pq = e$ and $qp=f$.
%Then the map $A e \to A f$ defined by
%$ae \mapsto aep$ and the map $Af \to Ae$ defined by
%$af \mapsto afq$ are inverse each other.
%Therefore we have $(1) \Leftrightarrow (3)$.
%
%Similarly we have $(2) \Leftrightarrow (3)$.
%\end{proof}

\begin{lem}\label{p-1.4.1}
For $1 \le i \le n$ and $1 \le j \le n_i$, we have
$e_i A e \cong e_{ij} A e$ as
right $e A e$-modules. 
\end{lem}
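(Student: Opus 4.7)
My plan is to transfer the right $A$-module isomorphism $e_iA \cong e_{ij}A$ (which holds by the grouping hypothesis on the decomposition \eqref{eq:1.4.1}) into an isomorphism of the corresponding right $eAe$-modules $e_iAe$ and $e_{ij}Ae$. The bridge between the two statements is precisely Lemma \ref{p-1.4.0}, which converts an isomorphism of right $A$-modules into a matched pair of elements intertwining the two idempotents.

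First I would invoke Lemma \ref{p-1.4.0} \eqref{p-1.4.b} $\Leftrightarrow$ \eqref{p-1.4.0.c}: since $e_{ij}A \cong e_{i1}A = e_iA$ as right $A$-modules, there exist elements $p \in e_i A e_{ij}$ and $q \in e_{ij} A e_i$ satisfying $pq = e_i$ and $qp = e_{ij}$. Next I would use these elements to define two candidate maps by left multiplication,
\begin{align}
\varphi : e_i A e \to e_{ij} A e, \quad \varphi(x) = qx, \qquad \psi : e_{ij} A e \to e_i A e, \quad \psi(y) = py. \notag
\end{align}
I must check these land in the stated subspaces: for $x = e_i a e \in e_iAe$ we have $qx = (e_{ij}qe_i)(e_iae) = e_{ij}(qae) e \in e_{ij}Ae$, and symmetrically for $\psi$. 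Both maps are right $eAe$-linear because left multiplication by a fixed element always commutes with right multiplication.

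Finally I would verify that $\varphi$ and $\psi$ are mutually inverse: for $x \in e_iAe$, $\psi(\varphi(x)) = pqx = e_i x = x$ (the last equality using $x \in e_iAe$), and similarly $\varphi(\psi(y)) = qpy = e_{ij}y = y$ for $y \in e_{ij}Ae$. This shows $e_iAe \cong e_{ij}Ae$ as right $eAe$-modules.

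The argument is essentially formal once Lemma \ref{p-1.4.0} is in hand; the only mild obstacle is keeping the idempotent book-keeping straight to confirm that the left-multiplication maps actually take values in the intended one-sided coset spaces, but this is immediate from $p \in e_iAe_{ij}$ and $q \in e_{ij}Ae_i$.
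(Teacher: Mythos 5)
Your proof is correct and follows essentially the same route as the paper: both invoke Lemma~\ref{p-1.4.0} to obtain $p \in e_iAe_{ij}$ and $q \in e_{ij}Ae_i$ with $pq=e_i$, $qp=e_{ij}$, and then exhibit the mutually inverse right $eAe$-module maps given by left multiplication by $q$ and $p$. The only difference is notational (the paper writes $p_{ij}, q_{ij}$ with the roles of your $p$ and $q$ swapped).
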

\begin{proof}
By Lemma \ref{p-1.4.0} and the fact $e_i A \cong e_{ij} A$,
there exist $p_{ij} \in e_{ij} A e_i$ and $q_{ij} \in e_i A e_{ij}$
such that $p_{ij} q_{ij} = e_{ij}$ and $q_{ij} p_{ij} = e_i$.
Then the maps $\alpha : e_{ij} A e \to e_{i} A e$ defined by
$\alpha (e_{ij} a e) = q_{ij} a e$ and
$\beta : e_{i} A e \to e_{ij} A e$ defined by $\beta (e_i a e) = p_{ij} a e$
are $e A e$-homomorphisms and are inverse each other.
Thus we have shown the lemma.
\end{proof}

For any $a e \in Ae$,
it is not difficult to check that $ae = \sum_{i = 1}^{n} \alpha_i(a) e_i$
where $\alpha_i (a) = a e_i$.
Thus $\{e_i, \alpha_i \}_{i=1}^{n}$ is an $A$-coordinate system of $_{A}Ae$.

By the proof of  Lemma \ref{p-1.4.2},
we can see that
$e_{ij} A e_{eAe}$ is generated by $p_{ij} \in e_{ij} A e_i$
such that $p_{ij} q_{ij} = e_{ij}$ and $q_{ij} p_{ij} = e_i$ for some $q_{ij} 
\in e_i A e_{ij}$.
Note that we can choose $p_{i1} = q_{i1} = e_{i1} = e_i$.
For any $a e \in Ae$, we set $\beta_{ij} (ea) = q_{ij} a e \in eAe$
for all $1 \le i \le n$ and $1 \le j \le n_i$.
Then we have $\beta_{ij} \in \Hom_{eAe} (A e, eAe)$
and 
$\sum_{i = 1}^{n} \sum_{j=1}^{n_i} p_{ij} \beta_{ij} (a e) = 
\sum_{i = 1}^{n} \sum_{j=1}^{n_i} e_{ij} (a e) = ae$
by \eqref{eq:1.4.1}.
Thus $\{p_{ij}, \beta_{ij} | 1 \le i \le n, \ 1 \le j \le n_i \}$
is an $eAe$-coordinate system of $Ae_{eAe}$.
In the following, we fix the $A$-coordinate system $\{e_i, \alpha_i\}_{i=1}^n$ 
of $_AAe$ and the $eAe$-coordinate system $\{p_{ij}, \beta_{ij} | 1 \le 
i \le n, \ 1 \le j \le n_i \}$ of $Ae_{eAe}$.
\begin{lem}\label{p-1.4.3}
\begin{enumerate}
\item\label{p-1.4.3.a}
Let $\phi$ be a symmetric linear function on $A$. 
Then 
$\phi_{_A Ae} (eae) = \phi (e a e)$
for all $eae \in eAe$.
\item\label{p-1.4.3.b}
Let $\psi$ be a symmetric linear function on $eAe$. 
Then $\psi_{Ae_{eAe}} (a) = \psi (\sum_{i=1}^{n} \sum_{j=1}^{n_i}
q_{ij} a p_{ij})$
for all  $a \in A$.
\end{enumerate}
\end{lem}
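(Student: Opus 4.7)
The plan is to unfold both sides of each identity against the coordinate systems fixed immediately before the lemma, namely $\{e_i,\alpha_i\}_{i=1}^n$ for ${}_AAe$ with $\alpha_i(ae) = ae_i$, and $\{p_{ij},\beta_{ij}\}$ for $Ae_{eAe}$ with $\beta_{ij}(ae) = q_{ij}ae$.

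For (a), I would identify $eae \in eAe$ with its image $r(eae) \in \End_A({}_AAe)$, acting as right multiplication, so $r(eae)(e_i) = e_i \cdot eae = e_i a e$ (using $e_i e = e_i$). Applying $\alpha_i$ gives $e_i a e \cdot e_i$, which reduces to $e_i a e_i$ by the orthogonality of the $e_j$'s that sum to $e$. Summing yields
\begin{align}
\phi_{{}_AAe}(eae) = \phi\Bigl(\sum_{i=1}^n e_i a e_i \Bigr). \notag
\end{align}
Symmetry of $\phi$ then gives $\phi(e_i a e_i) = \phi(a e_i \cdot e_i) = \phi(a e_i)$, so the sum collapses to $\phi(a e)$, and one more application of symmetry yields $\phi(ae) = \phi(eae)$.

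For (b), I would identify $a \in A$ with $\ell(a) \in \End_{eAe}(Ae_{eAe})$, which is left multiplication, so $\ell(a)(p_{ij}) = a p_{ij} \in Ae$. Since $\beta_{ij}$ restricted to $Ae$ is simply left multiplication by $q_{ij}$ (noting that $q_{ij}\cdot Ae \subseteq eAe$ follows from $q_{ij} \in e_i A e_{ij}$ and $e_i \in eA$), one obtains $\beta_{ij}(ap_{ij}) = q_{ij} a p_{ij}$. Summing and applying $\psi$ delivers
\begin{align}
\psi_{Ae_{eAe}}(a) = \psi\Bigl(\sum_{i=1}^n \sum_{j=1}^{n_i} q_{ij} a p_{ij} \Bigr). \notag
\end{align}

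No step presents a real obstacle; the only care required is to keep straight the identifications $r$ and $\ell$ from Lemma~\ref{p-1.4.2} and to invoke symmetry of $\phi$ twice at the end of part (a).
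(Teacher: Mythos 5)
Your proposal is correct and follows essentially the same route as the paper: unfold $\phi_{{}_AAe}$ and $\psi_{Ae_{eAe}}$ against the fixed coordinate systems, identify $eae$ and $a$ with endomorphisms via $r$ and $\ell$, and compute. The only cosmetic difference is in finishing part (a): you reduce $\sum_i\phi(e_iae_i)$ to $\phi(ae)=\phi(eae)$ by two uses of symmetry, whereas the paper instead observes $\phi(e_iAe_j)=0$ for $i\neq j$ so that $\sum_i\phi(e_iae_i)=\phi(eae)$ directly; both are immediate.
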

\begin{proof}
Since 
$\phi_{_AAe} (eae) = \sum_{i=1}^{n} \phi (\alpha_i (e_i eae))
= \sum_{i=1}^{n} \phi (e_i a e_i)$ and
$\phi$ is symmetric, we obtain
$\phi (e_i A e_j) = \phi (e_j e_i A e_j) = 0$ for $i \not= j$,
which shows the first assertion.

The second assertion is proved as follows:
\begin{align}
\psi_{Ae_{eAe}} (a) &= \sum_{i = 1}^{n} \sum_{j=1}^{n_i} \psi 
 (\beta_{ij} (a p_{ij})) = \sum_{i = 1}^{n} \sum_{j=1}^{n_i} \psi 
 ( q_{ij} a p_{ij}). \notag
\end{align}
\end{proof}

\begin{thm}\label{p-1.4.4}
\begin{enumerate}
\item\label{p-1.4.4.a}
Let $\phi$ be a symmetric linear function on $A$.
Then
$(\phi_{_AAe})_{Ae_{eAe}}(a) = \phi (a)$
for all $a \in A$.
\item\label{p-1.4.4.b}
Let $\psi$ be a symmetric linear function on $eAe$.
Then we have
$(\psi_{Ae_{eAe}})_{_AAe} (eae) = \psi (eae)$
for all $eae \in eAe$.
\item\label{p-1.4.4.c}
The space of symmetric linear functions on $A$
and the one of $eAe$ are isomorphic as vector spaces.
\end{enumerate}
\end{thm}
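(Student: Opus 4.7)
The plan is to establish \eqref{p-1.4.4.a} and \eqref{p-1.4.4.b} by direct computation combining the two formulas in Lemma \ref{p-1.4.3}, and then to deduce \eqref{p-1.4.4.c} at once from the fact that (a) and (b) together say the two linear maps $(-)_{{}_AAe}$ and $(-)_{Ae_{eAe}}$ are mutually inverse.

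For \eqref{p-1.4.4.a}, I would first apply Lemma \ref{p-1.4.3}\eqref{p-1.4.3.b} with $\psi = \phi_{{}_AAe}$ to get
\[(\phi_{{}_AAe})_{Ae_{eAe}}(a) = \phi_{{}_AAe}\Bigl(\sum_{i=1}^{n}\sum_{j=1}^{n_i} q_{ij} a p_{ij}\Bigr).\]
Since $q_{ij} \in e_i A e_{ij}$ and $p_{ij} \in e_{ij} A e_i$, each summand $q_{ij} a p_{ij}$ lies in $e_i A e_i \subseteq eAe$, so Lemma \ref{p-1.4.3}\eqref{p-1.4.3.a} lets me replace $\phi_{{}_AAe}$ by $\phi$ term by term. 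The symmetric property of $\phi$ on $A$ together with $p_{ij} q_{ij} = e_{ij}$ then converts $\phi(q_{ij} a p_{ij})$ into $\phi(p_{ij} q_{ij} a) = \phi(e_{ij} a)$, and summing over $i,j$ via the decomposition of unity \eqref{eq:1.4.1} yields $\phi(a)$.

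For \eqref{p-1.4.4.b}, I would apply the same two lemmas in the opposite order. Lemma \ref{p-1.4.3}\eqref{p-1.4.3.a} first reduces $(\psi_{Ae_{eAe}})_{{}_AAe}(eae)$ to $\psi_{Ae_{eAe}}(eae)$, which Lemma \ref{p-1.4.3}\eqref{p-1.4.3.b} then expands into $\psi\bigl(\sum_{i,j} q_{ij} (eae) p_{ij}\bigr)$. A short calculation using $e_{ij} e = \delta_{j,1} e_i$ shows that only the $j=1$ terms survive, and since $p_{i1} = q_{i1} = e_i$ the sum collapses to $\sum_i e_i a e_i$. On the other hand, the orthogonality of the $e_i$ together with the symmetry of $\psi$ on $eAe$ forces $\psi(e_i a e_k) = 0$ whenever $i \neq k$ (the same trick as in the proof of Lemma \ref{p-1.4.3}\eqref{p-1.4.3.a}, applied inside $eAe$ by writing $e_i a e_k = e_i (eae) e_k$). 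Hence $\psi(eae) = \sum_{i,k}\psi(e_i a e_k) = \sum_i \psi(e_i a e_i)$, which matches the reduced sum.

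Part \eqref{p-1.4.4.c} is then immediate: (a) and (b) state that the compositions $(-)_{Ae_{eAe}} \circ (-)_{{}_AAe}$ and $(-)_{{}_AAe} \circ (-)_{Ae_{eAe}}$ are the identity maps on $\SLF(A)$ and $\SLF(eAe)$ respectively, so both are mutually inverse linear isomorphisms. I expect no substantial obstacle in this plan; the only care required is to keep track of which algebra one is symmetrizing in at each step, since $\phi$ is symmetric on $A$ but $\phi_{{}_AAe}$ is only symmetric on $eAe$, and conversely for $\psi$.
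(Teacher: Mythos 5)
Your argument is correct and follows essentially the same route as the paper: combine the two formulas of Lemma \ref{p-1.4.3}, use the cyclic symmetry of $\phi$ and $p_{ij}q_{ij}=e_{ij}$ for (a), and observe in (b) that only the $j=1$ terms survive; (c) then follows as you say. The only difference is cosmetic — you spell out the vanishing of the $j\neq 1$ terms and the simplification of $\psi(\sum_i e_i a e_i)$, which the paper leaves implicit behind the remark ``since $q_{i1}=p_{i1}=e_i$.''
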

\begin{proof}
By Lemma \ref{p-1.4.3}, we have
\begin{align}
(\phi_{_AAe})_{Ae_{eAe}}(a) &= (\phi)_{_AAe} 
(\sum_{i=1}^{n} \sum_{j=1}^{n_i} q_{ij} a p_{ij}) 
= \phi (\sum_{i=1}^{n} \sum_{j=1}^{n_i} q_{ij} a p_{ij} ) \notag\\ 
&= \phi (\sum_{i=1}^{n} \sum_{j=1}^{n_i} p_{ij} q_{ij} a) 
= \phi (\sum_{i=1}^{n} \sum_{j=1}^{n_i} e_{ij} a) = \phi (a) \notag
\end{align}
which shows \eqref{p-1.4.4.a}.

By Lemma \ref{p-1.4.3}, we have
\begin{align}
(\psi_{Ae_{eAe}})_{_AAe}(eae)
&= \psi_{Ae_{eAe}} (eae) 
= \psi (\sum_{i=1}^{n} \sum_{j=1}^{n_i} q_{ij} eae p_{ij}) \notag\\
&= \psi (eae), \notag
\end{align}
since $q_{i1} = p_{i1} = e_{i}$.

Hence we can see that two linear maps $(-)_{_AAe} : \SLF(A) \to \SLF(eAe)$
and $(-)_{Ae_{eAe}} : \SLF(eAe) \to \SLF(A)$
are inverse each other,
which shows the last assertion.
\end{proof}

\begin{rem}
The statement \eqref{p-1.4.4.a} of Theorem \ref{p-1.4.4} for $a \in \soc(A)$
is found in \cite[Lemma 3.9]{Mi}.
The statement \eqref{p-1.4.4.c} of Theorem \ref{p-1.4.4} is well-known (see \cite[6.1]{NS}).
\end{rem}
For $\phi \in \SLF(A)$,
we set
$\Rad(\phi) = \{a \in A \ | \ \phi(A a) = 0\}$.
Then $\Rad(\phi)$ is a two-sided ideal of $A$
and $\phi$ induces a symmetric linear function on $A / \Rad(\phi)$.
Note that $A / \Rad(\phi)$ is a symmetric algebra
since $\phi$ is well-defined on $A / \Rad(\phi)$
and induces a nondegenerate symmetric associative bilinear form
on $A / \Rad(\phi)$.

Let $A = A_1 \oplus A_2 \oplus \cdots \oplus A_{\ell}$
be a decomposition into two-sided ideals of $A$.
For any $\phi \in \SLF(A)$, 
we have $\phi = \phi_1 + \phi_2 + \cdots + \phi_{\ell}$
where $\phi_i = \phi | _{A_{i}}$.
Note that $\phi_{i} \in \SLF(A_i)$.
If $\phi (a A) = 0$ for some  $a \in A$,
then we can see that $\phi_i (a A_{i}) \subseteq \phi (a A) = 0$.

\begin{thm}\label{p-1.4.6}
Let
$\phi$ be a symmetric linear function on $A$
and $\nu$ a central element of $A$.
Assume that $\phi ((\nu - r)^{s} a) = 0$ for any $a \in A$
and that $A = A_1 \oplus A_2 \oplus \cdots \oplus A_{\ell}$
is a decomposition of $A$ into two-sided ideals.
Then there exist symmetric linear functions $\phi_i \in \SLF(A_i)$,
basic symmetric algebras $P_i$ of $B_i = A / \Rad (\phi_i)$
and $(A, P_i)$-bimodules $M_i$
satisfying $(\nu - r)^s M_i= 0$.
Moreover, 
\begin{align} 
\phi (b) = \sum_{i=1}^{\ell} ((\phi_{i})_{{}_{B_i} M_i})_{(M_i)_{P_i}} (b) \notag
\end{align}
for all $b \in A$
where $b$ in the right hand side is viewed as a linear
map defined by the left action of $b \in A$ on each $(A, P_i)$-bimodule $M_i$. 
\end{thm}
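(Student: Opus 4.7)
The plan is to reduce to a single block of the decomposition and then invoke Theorem~\ref{p-1.4.4}(a) blockwise. Since $A = A_1 \oplus \cdots \oplus A_\ell$ is a decomposition into two-sided ideals, every $\phi \in \SLF(A)$ splits as $\phi = \sum_i \phi_i$ with $\phi_i = \phi|_{A_i} \in \SLF(A_i)$, and each $\phi_i$ descends to a symmetric linear function on $B_i = A_i/\Rad(\phi_i)$, which by construction is a symmetric algebra. Fix a decomposition of the unity of $B_i$ by mutually orthogonal primitive idempotents as in \eqref{eq:1.4.1}, and let $e_i \in B_i$ be a sum of representatives of the isomorphism classes; then $P_i = e_i B_i e_i$ is a basic algebra associated with $B_i$, symmetric by Theorem~\ref{p-1.4.4}(c). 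Take $M_i = B_i e_i$ as a $(B_i, P_i)$-bimodule and make it into an $(A, P_i)$-bimodule by pulling back the left $B_i$-action along the ring homomorphism $A \twoheadrightarrow A_i \twoheadrightarrow B_i$.

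To check $(\nu - r)^s M_i = 0$, observe that the hypothesis $\phi((\nu - r)^s a) = 0$ for all $a \in A$, combined with the symmetry of $\phi$ and the centrality of $\nu$, is the statement that $(\nu - r)^s \in \Rad(\phi)$. Restricting to the $i$-th component, the element $(\nu - r)^s e_{A_i}$ of $A_i$ (where $e_{A_i}$ is the central idempotent of $A$ corresponding to $A_i$) lies in $\Rad(\phi_i)$, so it is zero in $B_i$. Since the $A$-action on $M_i$ factors through $B_i$, this yields $(\nu - r)^s M_i = 0$ as required.

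For the formula, I would apply Theorem~\ref{p-1.4.4}(a) to the symmetric algebra $B_i$ with idempotent $e_i$ and symmetric linear function $\phi_i$ on $B_i$: this gives $((\phi_i)_{{}_{B_i} M_i})_{(M_i)_{P_i}}(\bar c) = \phi_i(\bar c)$ for every $\bar c \in B_i$. For $b \in A$, the left-multiplication endomorphism of $(M_i)_{P_i}$ induced by $b$ corresponds, under the isomorphism $B_i \cong \End_{P_i}((M_i)_{P_i})$ of Lemma~\ref{p-1.4.2}(b), to the image $\bar b_i \in B_i$ of the $A_i$-component $b_i$ of $b$ (since the other $A_j$ act as zero on $M_i$). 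Hence the $i$-th summand equals $\phi_i(\bar b_i) = \phi_i(b_i)$, and summing over $i$ recovers $\phi(b)$.

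The main obstacle is the bookkeeping around the three identifications at play: the two-step quotient $A \to A_i \to B_i$, the isomorphism $B_i \cong \End_{P_i}(M_i)$ from Lemma~\ref{p-1.4.2}(b) that translates left multiplication by $b$ into an element of $B_i$, and the descent of $\phi_i$ from $A_i$ to $B_i$. Once these are carefully unwound, the claim reduces to Theorem~\ref{p-1.4.4}(a) applied blockwise, so there is no genuine algebraic difficulty beyond setting up the right bimodules $M_i$ and verifying the compatibility of the actions.
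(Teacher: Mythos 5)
Your proof follows essentially the same route as the paper: split $\phi = \sum_i \phi_i$ over the block decomposition, pass to $B_i = A_i/\Rad(\phi_i)$, form the basic algebra $P_i$ and the bimodule $M_i = B_i\overline{e}_i$, observe that $(\nu-r)^s \in \Rad(\phi_i)$ so that $(\nu-r)^s M_i = 0$, and close with Theorem~\ref{p-1.4.4}~\eqref{p-1.4.4.a} applied blockwise. The one quibble is your citation of Theorem~\ref{p-1.4.4}~\eqref{p-1.4.4.c} for the symmetry of $P_i$: that result only gives a linear isomorphism of $\SLF$-spaces and by itself does not guarantee that a nondegenerate form on $B_i$ corresponds to a nondegenerate form on $P_i$, so the paper instead invokes \cite[10.1]{NS} for the standard fact that the basic algebra of a symmetric algebra is symmetric (though your route can be repaired by noting that $\phi_i$ restricted to $\overline{e}_iB_i\overline{e}_i$ is indeed nondegenerate, since $\phi_i(b\,\overline{e}_ic\overline{e}_i)=\phi_i(\overline{e}_ib\overline{e}_i\,\overline{e}_ic\overline{e}_i)$ by symmetry).
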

\begin{proof}
Set $B_i = A / \Rad(\phi_i)$.
Since  $\Rad(\phi_i) \supseteq A_j$ for $j \not= i$,
we can see that $B_i = A_i / \Rad(\phi_i)$.
We first note that the symmetric linear function $\phi_i$ on $B_i$
is well-defined
and that $B_i$ is naturally a left $A$-module.
Let $P_i = \overline{e}_i (A / \Rad(\phi_i)) \overline{e}_i$ be
the basic algebra of $A / \Rad(\phi_i)$ where $\overline{e}_i$ is an idempotent of
$B_i$.
The basic algebra $P_i$ is a symmetric algebra by \cite[10.1]{NS}.
Then we set  $M_i = (A / \Rad(\phi_i)) \overline{e}_i$
which is an  $(A, P_i)$-bimodule.
By the argument before the statement of this theorem, 
we can see that $(\nu - r)^s \in \Rad(\phi_i)$ 
and
thus $(\nu - r)^s M_i = 0$.
Note that the left action of $a \in A$
defines a right $P_i$-module endomorphism of $M_i$.
By Lemma \ref{p-1.4.3},
we have
$\phi_i (b) = \phi_i (\overline{b}) = ((\phi_i)_{_{B_i} M_i})_{(M_i)_{P_i}} (\overline{b})
= ((\phi_i)_{_{B_i} M_i})_{(M_i)_{P_i}}(b)$
for all $b \in A_i$, which shows the theorem.
\end{proof}

\begin{rem}
This theorem is found in \cite[Theorem 3.10]{Mi}.
In the proof of \cite[Theorem 3.10]{Mi}, 
it is shown that a symmetric linear function on $A$
may be written as a sum of pseudotrace maps even if $A$ is indecomposable 
by using the fact
$(\phi_{_A Ae})_{Ae_{eAe}} (a) = \phi (a)$ for all $a \in \soc(A)$ (see \cite[Lemma 3.9]{Mi})
in our notation.
However, since $(\phi_{_A Ae})_{Ae_{eAe}} (a) = \phi (a)$ for all $a \in A$,
any symmetric linear function can be written by
only one symmetric linear function on the endomorphism ring of the 
$(A, P)$-bimodule
if $A$ is indecomposable.
\end{rem}
\section*{Acknowledgement}
The author expresses his gratitude to Prof.~K.~Nagatomo
for his encouragement and useful comments.

\end{document}